\newtheorem{theorem}{Theorem}
\newtheorem{lemma}{Lemma}
\newtheorem{proposition}{Proposition}
\newtheorem{corollary}{Corollary}
\theoremstyle{definition}
\theoremstyle{remark}
\newtheorem{remark}{Remark}
\let\wt\widetilde
\let\ol\overline
\newcommand\tsqrt[1]{{\textstyle\sqrt{#1}}}
\renewcommand\d{{\mathrm d}}
\newcommand{\m}{\mathrm{m}}
\newcommand{\C}{\mathbb{C}}
\newcommand{\Q}{\mathbb{Q}}
\newcommand{\Z}{\mathbb{Z}}
\renewcommand{\Im}{\operatorname{Im}}
\newcommand{\Res}{\operatorname{Res}}
\definecolor{orange}{rgb}{0.7,0.3,0}
\begin{document}

\title[Boyd's conjecture for a conductor 21 elliptic curve]{Further explorations of Boyd's conjectures\\ and a conductor 21 elliptic curve}

\author{Matilde Lal\'\i n}
\address{D\'{e}partment de Math\'{e}matique et de Statistique, Universit\'{e} de Montr\'{e}al. CP 6128, succ. Centre-Ville. Montreal, QC H3C 3J7, Canada}
\email{mlalin@dms.umontreal.ca}

\author{Detchat Samart}
\address{D\'{e}partment de Math\'{e}matique et de Statistique, Universit\'{e} de Montr\'{e}al. CP 6128, succ. Centre-Ville. Montreal, QC H3C 3J7, Canada}
\address{Department of Mathematics, University of Illinois at Urbana-Champaign, Urbana, IL 61801, USA}
\email{petesamart@gmail.com}

\author{Wadim Zudilin}
\address{School of Mathematical and Physical Sciences, The University of Newcastle, Callaghan, NSW 2308, Australia}
\email{wadim.zudilin@newcastle.edu.au}


\subjclass[2010]{Primary 11R06; Secondary 11F67, 11G05, 11G16, 19F27, 33C75, 33E05}
\keywords{Mahler measure; $L$-value; elliptic curve; elliptic regulator; modular unit}

\date{31 July 2015. \emph{Revised}: 16 December 2015}
\begin{abstract}
We prove that the (logarithmic) Mahler measure $\m(P)$ of
$P(x,y)=x+1/x+y+1/y+3$ is equal to the $L$-value $2L'(E,0)$ attached
to the elliptic curve $E:P(x,y)=0$ of conductor 21. In order to do
this we investigate the measure of a more general Laurent polynomial
$$
P_{a,b,c}(x,y)=a\biggl(x+\frac1x\biggr)+b\biggl(y+\frac1y\biggr)+c
$$
and show that the wanted quantity $\m(P)$ is related to a
``half-Mahler'' measure of $\tilde P(x,y)=P_{\sqrt7,1,3}(x,y)$. In the
finale we use the modular parametrization of the elliptic curve
$\tilde P(x,y)=0$, again of conductor 21, due to Ramanujan and the
Mellit--Brunault formula for the regulator of modular units.
\end{abstract}

\maketitle

\section{Introduction}\label{intro}

For a nonzero Laurent polynomial $P(x_1,\dots,x_d)\in\mathbb{C}[x_1^{\pm 1},\dots,x_d^{\pm 1}]$, the (logarithmic) Mahler measure of $P$ is defined by
\begin{align*}
\m(P)
&= \frac{1}{(2\pi i)^d}\idotsint\limits_{|x_1|=\dots=|x_d|=1}\log |P(x_1,\dots,x_d)|\frac{\d x_1}{x_1}\dotsb\frac{\d x_d}{x_d}
\\
&= \int_0^1\dotsi\int_0^1\log|P(e^{2\pi i\theta_1},\dots,e^{2\pi i\theta_d})|\d\theta_1\dotsb\d\theta_d.
\end{align*}
It was observed by Deninger~\cite{Deninger} and then systematically verified by Boyd~\cite{Bo98} and Rodriguez-Villegas~\cite{RV} that
general conjectures of Beilinson predict the connection of two-variate Mahler measures $\m(P(x,y))$ with $L$-values of the corresponding curve $P(x,y)=0$,
at least in the cases when the curve is elliptic and the polynomial $P(x,y)=0$ is tempered
(for the definition of a tempered polynomial, see \cite[Section~III]{RV}).
A particular family of such two-variate polynomials, which has launched the elliptic $L$-story, is
$$
P_k(x,y)=x+\frac1x+y+\frac1y+k,
$$
with $k=1$ originally considered by Deninger in~\cite{Deninger} and later extended to $k\in\mathbb Z\setminus\{0\}$ by Boyd
and to $k^2\in\mathbb Z\setminus\{0\}$ by Rodriguez-Villegas (see \cite[Table~1]{Bo98} and \cite[Table~4]{RV}).
The work \cite{RV} has already incorporated some methods for attacking the conjectural evaluations of Mahler measures via $L$-values
and proving several such cases when the related elliptic curves $P(x,y)=0$ have complex multiplication.
Some further development of the techniques in the works of Brunault, Mellit, Rogers and others \cite{Br06,La10,LR,Me12,RZ12,RZ14,Zu14}
has allowed to establish several new conjectural instances when the elliptic curves are parameterized by modular units
(that is, modular functions whose zeroes and poles are only at cusps). The final news is an elegant general formula
of Brunault \cite{Brunault} that allows one to deal with parametrization by Siegel units; this creates an efficient way
to proving any particular Mahler measure evaluation on a case-by-case study, and the work \cite{Brunault} includes numerous
illustrations to the principle.

The original motivation of our project was computing the Mahler measure of polynomial $P_3(x,y)$
and providing explanation of certain related identities first observed numerically.
Boyd \cite[Table 1]{Bo98} conjectured that
\begin{equation}\label{Emain}
\m(P_3)=\frac{21}{2\pi^2}L(E,2)=2L'(E,0),
\end{equation}
where the elliptic curve $E$ of conductor~21 is defined as the zero locus of $P_3(x,y)$,
and this conjecture was confirmed only recently by Brunault \cite{Brunault}.
Note that from the modularity theorem we have $L(E,s)=L(f_{21},s)$, where $f_{21}(\tau)=q-q^2+q^3-q^4-\cdots \in S_2(\Gamma_0(21))$ and $q=e^{2\pi i\tau}$.
The curve $E:P_3(x,y)=0$ does not possess a modular-unit parametrization (and this is the smallest known example, in terms of conductor, of a tempered polynomial
with this property). At the same time, it is isogenous to
$$
\sqrt7\biggl(x+\frac1x\biggr)+y+\frac1y+3=0,
$$
which defines the modular curve $X_0(21)$ and whose modular-unit parametrization was already observed by Ramanujan (see Section~\ref{modular} for details).
However, the polynomial
$$
\sqrt7\biggl(x+\frac1x\biggr)+y+\frac1y+3
$$
does not happen to be tempered, and its Mahler measure is (in a certain sense) trivial\,---\,see equation~\eqref{E:log} below.

The aim of this paper is to develop a link between Mahler measures associated with general 3-parametric polynomials
\begin{equation*}
P_{a,b,c}(x,y)=a\biggl(x+\frac1x\biggr)+b\biggl(y+\frac1y\biggr)+c
\end{equation*}
and the Mahler measure of $\m(P_k)$ and, by these means, to give an alternative proof of evaluation~\eqref{Emain}.
Notice that for a large set of real parameters $a,b,c$ the Mahler measure has a trivial evaluation.

\begin{theorem}
\label{th-log}
If $a,b,c$ are real and $|c|\le2\bigl||a|-|b|\bigr|$ then
\begin{equation*}
\m(P_{a,b,c}(x,y))=\log\max\{|a|,|b|\}.
\end{equation*}
\end{theorem}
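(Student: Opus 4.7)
My plan is to apply Jensen's formula in the variable $x$ to reduce the two-dimensional Mahler measure integral to a one-dimensional integral, and then to show that the hypothesis forces the roots of the resulting quadratic to lie on the unit circle. The obvious identity $P_{a,b,c}(x,y)=P_{b,a,c}(y,x)$ together with the change of variables swapping $x$ and $y$ gives $\m(P_{a,b,c})=\m(P_{b,a,c})$; since both the hypothesis $|c|\le 2\bigl||a|-|b|\bigr|$ and the target value $\log\max\{|a|,|b|\}$ are symmetric in $a,b$, I may assume without loss of generality that $|a|\ge |b|$, reducing the claim to $\m(P_{a,b,c})=\log|a|$.

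Multiplying by $x$ leaves the Mahler measure unchanged, so I may view
\begin{equation*}
xP_{a,b,c}(x,y) = ax^2 + B(y)\,x + a, \qquad B(y) = b\bigl(y+\tfrac{1}{y}\bigr) + c,
\end{equation*}
as a degree-two polynomial in $x$ whose roots $x_1(y),x_2(y)$ satisfy $x_1x_2=1$. Applying Jensen's formula in $x$ and Fubini's theorem yields
\begin{equation*}
\m(P_{a,b,c}) = \log|a| + \frac{1}{2\pi}\int_{0}^{2\pi}\Bigl(\log^+|x_1(e^{i\phi})| + \log^+|x_2(e^{i\phi})|\Bigr)\,d\phi.
\end{equation*}
Because $x_1x_2=1$, the integrand equals $\log\max(|x_1|,|x_2|)\ge 0$ and vanishes precisely when $|x_1|=|x_2|=1$.

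The decisive step is recognizing when this happens. For $y=e^{i\phi}$ the coefficient $B(e^{i\phi})=2b\cos\phi+c$ is real, so the roots are either both real or a complex-conjugate pair; in the latter case $x_1x_2=1$ forces $|x_1|=|x_2|=1$. Hence the integrand vanishes exactly when the discriminant $(2b\cos\phi+c)^2-4a^2$ is nonpositive. The maximum of $|2b\cos\phi+c|$ as $\phi$ varies equals $|c|+2|b|$, so the discriminant is nonpositive for every $\phi$ if and only if
\begin{equation*}
|c|+2|b|\le 2|a|, \quad\text{i.e.,}\quad |c|\le 2\bigl(|a|-|b|\bigr)=2\bigl||a|-|b|\bigr|,
\end{equation*}
which is precisely the hypothesis.

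I do not foresee any serious obstacle; the argument is essentially a one-line discriminant analysis once Jensen's formula is applied. The only minor bookkeeping is the boundary case $|c|=2\bigl||a|-|b|\bigr|$, in which the discriminant vanishes at a finite (hence measure-zero) set of $\phi$'s, contributing nothing to the integral. The conclusion $\m(P_{a,b,c})=\log|a|=\log\max\{|a|,|b|\}$ follows.
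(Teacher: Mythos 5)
Your proof is correct and follows essentially the same route as the paper: after using the $a\leftrightarrow b$ symmetry to assume $|a|\ge|b|$, you apply Jensen's formula in $x$ and observe that the hypothesis forces the quadratic $ax^2+B(y)x+a$ to have complex-conjugate roots of modulus one (the paper phrases the discriminant condition as the real coefficient $B(y)/a$ lying in $[-2,2]$, which is the same thing). The only cosmetic difference is that the paper first normalizes $\m(P_{a,b,c})=\log|b|+\m(P_{a/b,1,c/b})$ to reduce to a two-parameter family before invoking the identical Jensen argument.
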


The proof of the result makes use of classical Jensen's formula
$$
\m(x-x_0) = \log^+|x_0| = \max\{0, \log |x_0|\};
$$
see Section~\ref{jensen} for details.

To proceed further we observe the symmetry $\m(P_{a,b,c})=\m(P_{b,a,c})$ of the family and the
property $\m(P_{a,b,c})=\log|b|+\m(P_{a/b,1,c/b})$ following from the definition of the Mahler measure.
These features allow us to consider instead the 2-parametric family
\begin{equation*}
P_{a,c}(x,y)=a\biggl(x+\frac1x\biggr)+\biggl(y+\frac1y\biggr)+c,
\end{equation*}
in which $|a|\ge1$. Though several statements about the polynomials remain valid even without the latter condition,
in what follows we focus our study on the case when $a,c$ are real and $a\ge1$, $c>0$.

If we write
\begin{gather*}
yP_{a,c}(x,y)=y^2+A(x)y+B(x)=(y-y_+(x))(y-y_-(x)),
\\
y_{\pm}(x)=\frac{-A(x)\pm\sqrt{A(x)^2-4B(x)}}{2},
\end{gather*}
then it follows from the definition of $\m(P)$ and Jensen's formula that
\begin{equation}\label{E:decom}
\m(P_{a,c}) = \m(yP_{a,c}) = \m(y-y_+) + \m(y-y_-) = \m^{+}(P_{a,c}) + \m^{-}(P_{a,c}),
\end{equation}
where
$$
\m^{\pm}(P_{a,c}) = \m(y-y_{\pm}) = \frac{1}{2\pi i}\int_{|x|=1}\log^+ |y_{\pm}(x)|\,\frac{\d x}{x}.
$$
When $a=\sqrt7$ and $c=3$, it is easily seen from Theorem~\ref{th-log} that
\begin{equation}\label{E:log}
\m(P_{\sqrt{7},3})=\log \sqrt{7}.
\end{equation}
On the other hand, if we decompose $\m(P_{\sqrt{7},3})$ according to~\eqref{E:decom},
each of $\m^{\pm}(P)$ encodes more interesting arithmetic quantities.
Indeed, we have discovered from our numerical computation that
\begin{align}
\m^{-}(P_{\sqrt{7},3})&=\frac{1}{2}L'(f_{21},0)+\frac{3}{8}\log 7,
\label{E:mm}\\
\m^{+}(P_{\sqrt{7},3})&=-\frac{1}{2}L'(f_{21},0)+\frac{1}{8}\log 7,
\nonumber
\end{align}
which will be proven subsequently in this paper. An equivalent form of any of these identities, namely
\begin{equation}\label{E:interm}
\m^{-}(P_{\sqrt{7},3}) - 3\m^{+}(P_{\sqrt{7},3}) = 2L'(f_{21},0),
\end{equation}
is reminiscent of Boyd's conjecture \eqref{Emain}, so we have hypothesized that there is a way to connect $\m(P_3)$
with the $L$-value by means of identity~\eqref{E:interm}.

More generally, we prove in Section~\ref{diff} the following result.

\begin{theorem}\label{th1}
For real $k$, $0<k<4$, let $a=\sqrt{(4+k)/(4-k)}$ and $c=k/\sqrt{4-k}$. Then
\begin{equation}
\m(P_{1,k})=\m^-(P_{a,c})-3\m^+(P_{a,c}).
\label{eq+-}
\end{equation}
\end{theorem}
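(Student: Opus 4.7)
The plan is to differentiate both sides of~\eqref{eq+-} with respect to $k$, verify that the derivatives coincide, and fix the integration constant using the boundary $k\to 0^+$, where both sides vanish.

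Applying Jensen's formula in $y$ and using the $\operatorname{arccosh}$-form of $\log^+|y_{\pm}|$, I first rewrite each Mahler measure as an arc integral:
\[
\m(P_{1,k}) = \frac{1}{\pi}\int_0^{\theta_k}\operatorname{arccosh}\!\bigl(\cos\theta + \tfrac{k}{2}\bigr)\,d\theta, \quad \cos\theta_k = 1 - k/2,
\]
with analogous formulas for $\m^{\pm}(P_{a,c})$ involving $A(\theta) = 2a\cos\theta+c$ integrated over the arcs where $A\ge 2$ or $A\le -2$. Differentiating in $k$ (the endpoint contributions vanish since $\operatorname{arccosh}(1)=0$) produces elliptic period integrals such as
\[
\frac{d\m(P_{1,k})}{dk} = \frac{1}{\pi}\int_0^{\theta_k}\frac{d\theta}{\sqrt{(2\cos\theta+k)^2-4}},
\]
and a corresponding combination on the right side, with additional terms from $a'(k)$ and $c'(k)$ arising via the chain rule.

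The key step is matching the two resulting elliptic integrals via an explicit substitution. The polynomials $P_{1,k}(X,Y)$ and $P_{a,c}(x,y)$ describe birationally equivalent curves: the map $(x,y)\mapsto(X,y)$ given by
\[
X+1/X = a(x+1/x) + (c-k)
\]
sends $P_{a,c}=0$ to $P_{1,k}=0$, because on $P_{a,c}=0$ we have $y+1/y = -a(x+1/x)-c$, and substitution gives $X+1/X+y+1/y+k=0$. The parameter choice $a=\sqrt{(4+k)/(4-k)}$ and $c=k/\sqrt{4-k}$ is tuned precisely so that the induced substitution $\cos\phi = a\cos\theta + (c-k)/2$ converts the LHS elliptic integrand into the RHS one; the coefficient $-3$ reflects the behavior of the substitution on the arcs of $|x|=1$ that lie outside the preimage of $|X|=1$ (where $X$ is real with $X<-1$), which combine with the $|X|=1$ arc to produce weights $+1$ for $\m^-$ and $-3$ for $\m^+$.

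For the boundary condition, as $k\to 0^+$ we have $a\to 1$, $c\to 0$, and $P_{1,0}(x,y) = x+1/x+y+1/y = (x+y)(1+xy)/(xy)$ factors into pieces of Mahler measure zero, hence $\m(P_{1,0})=0$; similarly $\m^{\pm}(P_{1,0})=0$.

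The main difficulty is the careful verification of the substitution in the key step --- correctly identifying the arcs on $|x|=1$ and confirming that the factor $-3$ (rather than some other coefficient) emerges from the pullback, which is where the arithmetic content of the identity ultimately lies.
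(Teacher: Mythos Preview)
Your overall framework coincides with the paper's: differentiate both sides in the real parameter, reduce to an identity between elliptic integrals, and fix the constant at the degenerate endpoint ($k\to 0^+$, equivalently $a\to 1^+$). The discrepancy is entirely in how the elliptic-integral identity is established, and this is where your proposal has a genuine gap.

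First, a correction: the curves $P_{1,k}=0$ and $P_{a,c}=0$ are \emph{not} birationally equivalent. They are $2$-isogenous (see the explicit isogeny in the paper's Section~\ref{isogeny}); for instance, $E_{a,c}$ has full rational $2$-torsion while $E_{1,k}$ does not. Your ``map'' $X+1/X=a(x+1/x)+(c-k)$, $Y=y$, is only a $2$-valued correspondence: given $x$, the value $X+1/X$ is determined but $X$ is not, and the discriminant $(a(x+1/x)+c-k)^2-4$ is not a square in $\mathbb Q(E_{a,c})$. Consequently the substitution $\cos\phi=a\cos\theta+(c-k)/2$ is only defined on the sub-arc of $|x|=1$ where the right-hand side lies in $[-1,1]$; off that arc $X$ is real with $|X|\ne 1$, and the integrand does not pull back to the form you need. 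The assertion that the off-arc pieces ``combine to produce weights $+1$ and $-3$'' is exactly the content of the theorem, and you have not given a mechanism for it.

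What the paper actually does at this step is quite different and more laborious. After differentiating, the two sides become combinations of four elliptic integrals $I,J_-,J_+$ (on $E_{a,c}$) and $K$ (on $E_{1,k}$). The period relation $K=\text{const}\cdot I$ is proved by reducing both to hypergeometric form and applying the quadratic transformation of ${}_2F_1(\tfrac12,\tfrac12;1;\,\cdot\,)$ (this is the analytic shadow of the $2$-isogeny, so your intuition is not misplaced). The coefficient $-3$, however, comes from a separate and nontrivial fact: a certain complete integral of the \emph{third} kind,
\[
v\int_{(\sqrt{2-v^2}-v^2+1)/v}^{1}\frac{(2t+v)\,\d t}{\sqrt{(1-t^2)(v^2t^2+2(v^2-1)vt+v^4-v^2-1)}},
\]
is shown to be \emph{constant}, equal to $3\pi/2$, by writing it as $(1-r)\bigl((1-r-2\sqrt{r^2+1})K(r^2)+4\sqrt{r^2+1}\,\Pi(f(r),r^2)\bigr)$ and verifying, via the differentiation formulas for $K,E,\Pi$, that its $r$-derivative vanishes identically. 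This is where the ``$3$'' is born; it does not drop out of a change of variables but from a delicate cancellation among $K$, $E$, and $\Pi$. Your arc-counting heuristic does not supply this, and without an analogue of this lemma the proof is incomplete.
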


Note that the parameters participating in~\eqref{eq+-} can be alternatively expressed by means of $a>1$:
\begin{equation*}
c=\frac{\sqrt{2}(a^2-1)}{\sqrt{a^2+1}}
\quad\text{and}\quad
k=\frac{4(a^2-1)}{a^2+1}.
\end{equation*}

In Section~\ref{Weierstrass} we compute a Weierstrass form of the elliptic curve $E$ given by $P_{a,c}(x,y)=0$
and identify some particular (torsion) points on the curve.
The fact that the ``half-Mahler'' measures $\m^{\pm}(P_{a,c})$ happen to be $\mathbb Q$-linear combinations of
$\log a$ and the $L$-value of~$E$ has its roots in a $K$-theoretic interpretation of
the measures and related Beilinson's conjectures. We review the corresponding heuristics, essentially due to
Deninger~\cite{Deninger} and Rodriguez-Villegas \cite{RV}, in Section~\ref{diamond}.
When the parameters $a$ and $c$ are subject to the hypothesis of Theorem~\ref{th1},
the curve $E:P_{a,c}(x,y)=0$ is isogenous to the curve $P_{1,k}(x,y)=0$, and it is this isogeny that gives rise
to the theorem. Section~\ref{isogeny} provides the reader with the details on the isogeny, outlines the proof
of a weaker version of Theorem~\ref{th1} and addresses
some other technical issues that are needed later in Section~\ref{modular}.

Our next principal result here is equation~\eqref{E:mm}.

\begin{theorem}\label{th2}
In the above notation,
\begin{equation*}
\m^{-}(P_{\sqrt{7},3})=\frac{1}{2}L'(f_{21},0)+\frac{3}{8}\log 7.
\end{equation*}
\end{theorem}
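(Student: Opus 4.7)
The plan is to interpret $\m^{-}(P_{\sqrt7,3})$ as a regulator integral along a cycle on the modular curve $X_0(21)$ and then evaluate it via Brunault's formula for the regulator of Siegel units.

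First I would convert the half-Mahler measure
\[
\m^{-}(P_{\sqrt7,3}) = \frac{1}{2\pi i}\int_{|x|=1}\log^+|y_-(x)|\,\frac{\d x}{x}
\]
into an integral of the regulator one-form $\eta(x,y_-) = \log|x|\,\d\arg y_- - \log|y_-|\,\d\arg x$ along the sub-arc $\{|x|=1,\,|y_-|\ge 1\}$, together with boundary contributions at the points where $|y_-|=1$. This is the ``diamond''-style reduction of Deninger and Rodriguez-Villegas recalled in Section~\ref{diamond}, and it reinterprets $\m^{-}$ as a pairing against a concrete one-cycle on $E:P_{\sqrt7,3}(x,y)=0$.

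Second, I would use the Ramanujan parametrization (Section~\ref{modular}) to lift $x(\tau)$ and $y_-(\tau)$ to modular units on $\Gamma_0(21)$, expressed as products of Siegel units. Under this parametrization the cycle $\{|x|=1,\,|y_-|\ge 1\}$ becomes (a rational multiple of) a modular symbol $\{\alpha,\beta\}$ joining cusps of $X_0(21)$. Brunault's formula~\cite{Brunault} then evaluates $\int_{\{\alpha,\beta\}}\eta(u_1,u_2)$ for two Siegel units $u_1,u_2$ as a rational linear combination of $L(f,2)$ for $f\in S_2(\Gamma_0(21))$. Since this space is one-dimensional and spanned by $f_{21}$, the non-trivial part of the answer is forced to be a rational multiple of $L(f_{21},2)$, which the functional equation converts into $L'(f_{21},0)$.

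The main obstacle will be the geometric and arithmetic bookkeeping in the second step: identifying the image of $\{|x|=1,\,|y_-|\ge 1\}$ in a fundamental domain of $\Gamma_0(21)$ and matching it with an explicit combination of modular symbols. The coefficient $\tfrac12$ should fall out of this matching, reflecting that we traverse only ``half'' of the natural $\{0,\infty\}$-symbol; the $\tfrac38\log 7$ term should arise from the boundary contributions at the points where $|y_-|=1$, together with leading $q$-expansion terms of the Siegel units at cusps of $\Gamma_0(21)$, whose widths and orders involve the divisors of $21=3\cdot 7$. A useful consistency check is that Theorem~\ref{th-log} forces $\m^{+}(P_{\sqrt7,3}) + \m^{-}(P_{\sqrt7,3}) = \tfrac12\log 7$, so the companion identity for $\m^{+}$ stated in~\eqref{E:mm} would follow for free once the identity for $\m^{-}$ is in place.
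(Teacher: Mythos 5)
Your overall strategy is the one the paper follows: rewrite $\m^-(P_{\sqrt7,3})$ as $-\frac{1}{2\pi}\int_{[S_-,S_+]}\eta(x,y_-)$, transport this to $X_0(21)$ via Ramanujan's eta-quotient parametrization \eqref{unit1-2}, and evaluate the resulting regulator integral with the Mellit--Brunault formula (the paper uses \cite[Theorem~1]{Zu14} rather than \cite{Brunault}, but it is the same tool). Two points in your sketch, however, hide genuine difficulties rather than bookkeeping.

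First, the image of the arc $\{|x|=1,\ |y_-|\ge1\}$ is the geodesic $[\tau_-,\tau_+]$ joining the two CM points $\tau_\pm=(\mp9+\sqrt{-3})/42$; it is \emph{not} a modular symbol between cusps, and because $P_{\sqrt7,1,3}$ is not tempered the symbol $\{x,y\}$ has non-trivial tame symbols at $P$, $-Q$, $P+Q$, $O$ (Lemma~\ref{tame}), i.e.\ $\eta$ has non-zero residues at the corresponding cusps $i\infty$, $0$, $2/7$. So one cannot simply deform the path onto a symbol $\{\alpha,\beta\}$ and quote Brunault: the paper's Lemma~\ref{lem2/7} is devoted to exactly this step, applying the Atkin--Lehner involutions $W_{21}$ and $W_7$ (which swap $\tau_+$ and $\tau_-$ and send $0$ to $2/7$), switching between the integrands $\eta(x_0,\wt y)$ and $\eta(x_0/7,\wt y)$ on different sub-arcs so that each sub-arc avoids the cusps where \emph{its} integrand has a residue, and using the reality of $x_0$ and $\wt y$ on $i\mathbb{R}_+$ to kill two of the resulting pieces. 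Your plan needs this (or an equivalent residue-managing device) before the third step can be run. Second, the $\tfrac38\log 7$ does not arise from boundary contributions at the points where $|y_-|=1$ --- those contribute nothing, since $\log|y_-|$ vanishes there --- but from the Mellit--Brunault formula itself: the weight-2 form it produces is $\tfrac{21}{4}f_{21}$ plus an Eisenstein combination $g$ of the $E_2(m\tau)$, and $L(g,2)=\tfrac{8\pi^2}{3}\log 7$ supplies the logarithm. Your consistency check via Theorem~\ref{th-log} is correct and is precisely how the paper deduces the companion identity for $\m^{+}$.
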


Our proof of Theorem~\ref{th2} relies crucially on the fact that the curve $E:P_{\sqrt{7},3}(x,y)=0$ admits a modular-unit parametrization,
which enables us to apply \cite[Theorem~1]{Zu14}. This part is achieved in Section~\ref{modular}.
We then have Boyd's evaluation \eqref{Emain} immediately from Theorems~\ref{th-log}--\ref{th2}. Indeed,
combining the above for $a=\sqrt{7}$ with equation \eqref{E:log} and the modularity theorem, we recover the expected result.

\begin{corollary}
Boyd's conjecture is true for $k=3$. In other words,
\[\m(P_3)= 2L'(E,0),\]
where $E$ is the elliptic curve $P_3(x,y)=0$.
\end{corollary}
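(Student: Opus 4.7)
The proof combines Theorems~\ref{th-log}--\ref{th2} in a short algebraic chain, so the plan is essentially to verify that the coefficients and logarithmic contributions line up. First I would invoke Theorem~\ref{th1} at $k=3$, for which the prescribed parameters are $a=\sqrt{(4+3)/(4-3)}=\sqrt{7}$ and $c=3/\sqrt{4-3}=3$. Since $P_{1,3}(x,y)=P_3(x,y)$, Theorem~\ref{th1} yields
\[
\m(P_3)=\m^-(P_{\sqrt{7},3})-3\,\m^+(P_{\sqrt{7},3}).
\]

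Next I would eliminate $\m^+$ in favour of $\m^-$ using the explicit total Mahler measure of $P_{\sqrt{7},3}$. The hypothesis $|c|\le 2\bigl||a|-|b|\bigr|$ of Theorem~\ref{th-log} is satisfied for $a=\sqrt{7}$, $b=1$, $c=3$, because $2(\sqrt{7}-1)>3$, and this is precisely identity~\eqref{E:log}: $\m(P_{\sqrt{7},3})=\log\sqrt{7}$. Combined with the decomposition~\eqref{E:decom} this gives $\m^+(P_{\sqrt{7},3})=\log\sqrt{7}-\m^-(P_{\sqrt{7},3})$, which I would substitute into the previous display to obtain
\[
\m(P_3)=4\,\m^-(P_{\sqrt{7},3})-3\log\sqrt{7}.
\]

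Finally I would apply Theorem~\ref{th2} and check the numerical cancellation:
\[
\m(P_3)=4\Bigl(\tfrac{1}{2}L'(f_{21},0)+\tfrac{3}{8}\log 7\Bigr)-\tfrac{3}{2}\log 7=2L'(f_{21},0),
\]
so that the $\log 7$ contributions wipe out exactly. The modularity theorem gives $L(E,s)=L(f_{21},s)$, whence $L'(E,0)=L'(f_{21},0)$ and therefore $\m(P_3)=2L'(E,0)$, as claimed.

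There is no genuine obstacle at this final step: the cancellation of the $\log 7$ terms is the only arithmetic check, and it is forced by the coefficient $\tfrac38$ produced in Theorem~\ref{th2} being matched, up to the factor $4$ from the reduction, against the $\tfrac32\log 7$ coming from $3\log\sqrt{7}$. The real work has already been done in establishing Theorem~\ref{th1} (via the explicit isogeny between the curves $P_{a,c}(x,y)=0$ and $P_{1,k}(x,y)=0$ together with its effect on the half-Mahler measures) and Theorem~\ref{th2} (via the Ramanujan modular-unit parametrization of $X_0(21)$ and the Mellit--Brunault regulator formula). Thus the corollary is an immediate bookkeeping consequence of the three theorems.
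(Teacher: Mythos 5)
Your argument is correct and is exactly the derivation the paper intends: Theorem~\ref{th1} at $k=3$ gives $\m(P_3)=\m^-(P_{\sqrt7,3})-3\m^+(P_{\sqrt7,3})$, equation~\eqref{E:log} together with~\eqref{E:decom} converts this to $4\m^-(P_{\sqrt7,3})-\tfrac32\log 7$, and Theorem~\ref{th2} plus the modularity theorem finish the cancellation. Nothing is missing; the corollary is, as you say, pure bookkeeping once the three theorems are in hand.
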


Finally, in Section~\ref{final} we outline some natural directions for future study.

\section{Jensen's formula}\label{jensen}

In this section we prove Theorem~\ref{th-log}.

\begin{lemma}
\label{loga}
If $a,c$ are real and $1+|c|/2\le|a|$ then
\begin{equation}
\m(P_{a,c}(x,y))=\m^-(P_{a,c}(x,y))+\m^+(P_{a,c}(x,y))=\log |a|.
\label{eqpm}
\end{equation}
\end{lemma}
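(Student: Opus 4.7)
The plan is to apply Jensen's formula with $P_{a,c}$ regarded as a polynomial in $x$ rather than $y$, exploiting the fact that $a$ is the leading coefficient in $x$ and that the polynomial is reciprocal in $x$ (the coefficients of $x^2$ and $x^0$ in $xP_{a,c}$ coincide). Multiplying through by $x$ gives
$$
xP_{a,c}(x,y) = ax^2 + B(y)\,x + a, \qquad B(y) := y + \frac{1}{y} + c,
$$
which factors as $a(x-x_+(y))(x-x_-(y))$ with roots $x_\pm(y) = \bigl(-B(y)\pm\sqrt{B(y)^2-4a^2}\,\bigr)/(2a)$. The crucial point is that $x_+(y)\,x_-(y) = 1$.

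Using $\m(P_{a,c}) = \m(xP_{a,c})$ together with Jensen's formula in the $x$-variable, I would compute
$$
\m(P_{a,c}) = \log|a| + \int_0^1 \Bigl(\log^+|x_+(e^{2\pi i\phi})| + \log^+|x_-(e^{2\pi i\phi})|\Bigr)\,\d\phi.
$$
It then remains to show that the integrand vanishes almost everywhere. For $y = e^{2\pi i\phi}$ on the unit circle, $B(y) = 2\cos(2\pi\phi) + c$ is real and satisfies $|B(y)| \le 2 + |c| \le 2|a|$ by the hypothesis $1+|c|/2\le|a|$. Thus $B(y)^2 - 4a^2 \le 0$, so $x_\pm(y)$ are complex conjugates; combined with $x_+(y)x_-(y) = 1$, this forces $|x_\pm(y)| = 1$, whence $\log^+|x_\pm(y)| = 0$. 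The possibly nonempty set where $|B(y)| = 2|a|$ (arising only in the equality case of the hypothesis) has measure zero and makes no contribution to the integral.

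This establishes the right-hand equality $\m(P_{a,c}) = \log|a|$; the left-hand equality $\m(P_{a,c}) = \m^-(P_{a,c}) + \m^+(P_{a,c})$ is immediate from \eqref{E:decom}. No serious obstacle arises here: the argument reduces to Jensen's formula together with the elementary observations that $x_+x_-=1$ and $|y+1/y+c|\le|c|+2$ on the unit circle. The only mild subtlety is the boundary case $1+|c|/2=|a|$, which is handled by the measure-zero remark above.
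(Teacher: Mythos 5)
Your proof is correct and follows essentially the same route as the paper: both apply Jensen's formula in the $x$-variable for fixed $y$ on the unit circle, using that the quadratic $ax^2+(y+1/y+c)x+a$ has real middle coefficient with $|y+1/y+c|\le 2+|c|\le 2|a|$, so its roots are complex conjugates of product $1$ and hence of modulus $1$. The only cosmetic difference is that the paper normalizes to the monic polynomial $xP_{a,c}/a$ before invoking Jensen, whereas you keep the leading coefficient and extract $\log|a|$ directly.
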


\begin{proof}
When $|a|\ge1+|c|/2$, the zeroes of the quadratic polynomial in $x$
$$
\frac{xP_{a,c}(x,y)}a
=x^2+\frac{y+1/y+c}a\,x+1
$$
are complex conjugate of absolute value~1, because $y+1/y$ is real between $-2$ and $2$ for $y$ on the unit circle,
so that the real coefficient $(y+1/y+c)/a$ is in the same range.
Therefore, Jensen's formula applies to imply that the Mahler measure of the latter polynomial
$$
\frac1{2\pi i}\int_{|x|=1}\log\biggl|\frac{xP_{a,c}(x,y)}a\biggr|\,\frac{\d x}x
=\frac1{2\pi i}\int_{|x|=1}\log|P_{a,c}(x,y)|\,\frac{\d x}x-\log|a|
$$
is zero for any $y$; integrating over~$y$ results in the required two-variate Mahler measure evaluation.
\end{proof}

\begin{proof}[Proof of Theorem~\textup{\ref{th-log}}]
It follows from the symmetry with respect to the parameters $a$ and $b$, the relation
$\m(P_{a,b,c})=\log|b|+\m(P_{a/b,1,c/b})$ and Lemma~\ref{loga}.
\end{proof}

\section{Weierstrass form}\label{Weierstrass}

Consider the curve
\[
E_{a,c}:a\biggl(x+\frac{1}{x}\biggr)+y+\frac{1}{y}+c=0.
\]
If we think of $y$ as a function on $x$, we have two solutions given by
\begin{equation}\label{eq:y_pm}
y_\pm(x)=\frac{-(a(x+x^{-1})+c))\pm\sqrt{(a(x+x^{-1})+c))^2-4}}{2}.
\end{equation}
On the torus $|x|=1$ we let $x=e^{i\theta}$, where $-\pi\le\theta\le\pi$, so that $x+1/x=2\cos\theta$. We have $(a(x+1/x)+c)^2-4>0$
if and only if $\cos\theta<t_-$ or $\cos\theta>t_+$, where
$$
t_-=\max\biggl\{-1,-\frac{2+c}{2a}\biggr\} \quad\text{and}\quad t_+=\min\biggl\{1,\frac{2-c}{2a}\biggr\}.
$$
Furthermore, if $\cos\theta<t_-$ then $|y_-(x)|<1<|y_+(x)|$, while if $\cos\theta>t_+$ then $|y_+(x)|<1<|y_-(x)|$.
Denote $\theta_->\theta_+$ the quantities from the interval $[0,\pi]$, for which $\cos\theta_-=t_-$ and $\cos\theta_+=t_+$.
In the notation of Section~\ref{intro} we have
\begin{align}
\label{m-}
\m^-(P_{a,c})
&=\frac1{2\pi}\int_{-\theta_+\le\arg x\le\theta_+}\log|y_-(x)|\,\d\arg x
=\frac1\pi\int_{0\le\arg x\le\theta_+}\log|y_-(x)|\,\d\arg x
\\
&=\frac1\pi\int_{t_+}^1\frac{\log|at+c/2+\sqrt{(at+c/2)^2-1}|}{\sqrt{1-t^2}}\,\d t,
\nonumber\\ 
\label{m+}
\m^+(P_{a,c})
&=\frac1{2\pi}\int_{\theta_-\le\arg x\le2\pi-\theta_-}\log|y_+(x)|\,\d\arg x
=\frac1\pi\int_{\theta_-\le\arg x\le\pi}\log|y_+(x)|\,\d\arg x
\\
&=\frac1\pi\int_{-1}^{t_-}\frac{\log|at+c/2-\sqrt{(at+c/2)^2-1}|}{\sqrt{1-t^2}}\,\d t.
\nonumber
\end{align}
Note that for $a=1$ we get $\theta_-=\pi$, so that $\m^+(P_{1,c})=0$ and
$$
\m(P_{1,c})=\m^-(P_{1,c})=\frac1\pi\int_{0\le\arg x\le\theta_+}\log|y_-(x)|\,\d\arg x.
$$

By taking
\[
X=-\frac{a}{xy}, \qquad Y=\frac{a}{2xy}\left(y-\frac{1}{y}-a\left(x-\frac{1}{x}\right)\right),
\]
we obtain the Weierstrass form
\[
Y^2=X\left(X^2+\left(\frac{c^2}{4}-1-a^2\right)X+a^2\right).
\]
We record here the inverse transformations
\begin{equation}\label{eq:Wtransf}
x=\frac{a(cX-2Y)}{2X(X-a^2)}, \qquad y=\frac{cX+2Y}{2X(X-1)}.
\end{equation}

In this Weierstrass form we have two points $P=(1,c/2)$ and $Q=(a^2,ca^2/2)$ that satisfy
\begin{equation}
\begin{aligned}
2P&=\biggl(\frac{(a^2-1)^2}{c^2},\frac{(a^2-1)(2a^4-a^2c^2-4a^2-c^2+2)}{2c^3}\biggr),
\\
2Q&=\biggl(\frac{(a^2-1)^2}{c^2},-\frac{(a^2-1)(2a^4-a^2c^2-4a^2-c^2+2)}{2c^3}\biggr),
\end{aligned}
\label{2PQ}
\end{equation}
and $P+Q=(0,0)$. Notice that when $a=1$, we get $P=Q$ and the point has order~4.

The images of $P$ and $Q$ on the original curve $E_{a,c}:P_{a,c}(x,y)=0$ are written as
$$
\bar P=(0,\infty) \quad\text{and}\quad \bar Q=\biggl(\frac{1-a^2}{ac},\frac c{a^2-1}\biggr)
$$
if $a\ne1$, and $\bar P=\bar Q=(0,\infty)$ if $a=1$.

If $c/2<a+1$, the points
\begin{equation}
\begin{aligned}
\bar S_\pm&=\biggl(\frac{1-c/2\pm\sqrt{(1-c/2)^2-a^2}}a,-1\biggr),
\\
\bar T_\pm&=\biggl(\frac{-1-c/2\pm\sqrt{(1+c/2)^2-a^2}}a,1\biggr)
\end{aligned}
\label{pointsST}
\end{equation}
on the curve $E_{a,c}$ correspond to $\arg x=\pm\theta_+$ and $\arg x=\pm\theta_-$ respectively; the latter case is considered only if $c/2<a-1$.
On the curve in its Weierstrass form the points are
\begin{align*}
S_\pm&=\textstyle\bigl(1-c/2\mp\sqrt{(1-c/2)^2-a^2},\pm\sqrt{(1-c/2)^2-a^2}\bigl(1-c/2\mp\sqrt{(1-c/2)^2-a^2}\bigr)\bigr),
\\
T_\pm&=\textstyle\bigl(1+c/2\pm\sqrt{(1+c/2)^2-a^2},\pm\sqrt{(1+c/2)^2-a^2}\bigl(1+c/2\pm\sqrt{(1+c/2)^2-a^2}\bigr)\bigr).
\end{align*}
Then we have
$$
2S_\pm=2T_\pm=P
\quad\text{and}\quad
S_++S_-=T_++T_-=-Q.
$$

\begin{remark}
\label{rem-specials}
At this stage we would like to identify two particular choices for $a$ and $c$ that correspond
to special structures of the group generated by $P$ and~$Q$.

First, when
\begin{equation}
c=\frac{\sqrt2(a^2-1)}{\sqrt{a^2+1}},
\label{c0}
\end{equation}
and in this case only, we have
$S_++T_+=-P$.
This case can be alternatively characterized by
\begin{equation*}
2P=2Q=\biggl(\frac{a^2+1}2,0\biggr)
\end{equation*}
in accordance with \eqref{2PQ}.

Second, if we prescribe
\begin{equation}\label{torsion6}
c=a^2-1
\end{equation}
then
\begin{equation*}
3P=O, \quad 3Q=(0,0).
\end{equation*}
\end{remark}

As usual, for any meromorphic functions $f,g$ on a smooth projective curve $C$, we define
\begin{equation}
\label{etadef}
\eta(f,g)=\log |f| \,\d\arg(g) - \log |g|\,\d\arg(f),
\end{equation}
where $\d \arg(g)$ is globally defined as $\Im(\d g/g)$.

Considering $x$ and $y=y_\pm$ as rational functions on the curve in its Weierstrass form we get from the earlier formulas via
the change of variables, for $c/2<a+1$,
\begin{align}
\label{yy-}
\m^-(P_{a,c})
&=\frac1{2\pi}\int_{[S_-,S_+]}\eta(y_-,x)
=-\frac1{2\pi}\int_{[S_-,S_+]}\eta(x,y_-)
\\ \intertext{and also, when $c/2<a-1$,}
\label{yy+}
\m^+(P_{a,c})
&=\frac1{2\pi}\int_{[T_+,T_-]}\eta(y_+,x)
=-\frac1{2\pi}\int_{[T_-,T_+]}\eta(x,y_-).
\end{align}

We stress on the fact that here and in what follows the integration of $\eta(x,y)$ is performed on the curve in its Weierstrass form
(that is, in the coordinates $(X,Y)$); the coordinates $x$ and $y$ are understood as the rational functions \eqref{eq:Wtransf} of $X$ and $Y$.
This implies that we integrate in \eqref{yy-} and \eqref{yy+} over the intervals $[S_-,S_+]$ and $[T_-,T_+]$ rather than over $[\bar S_-,\bar S_+]$ and $[\bar T_-,\bar T_+]$.

There is no simple recipe to identify $y$ (as defined by equation \eqref{eq:Wtransf}) with either $y_-$ or~$y_+$.
However we can notice, on the basis of equations \eqref{eq:y_pm}, that
as $x \to 0$ and $x\to\infty$ we have $y_+\to 0$ and $y_-\to \infty$, respectively,
and this can be used in the identification.
We discuss these issues in Section~\ref{isogeny} below.

\section{Elliptic regulator}
\label{diamond}

This section has two goals. The first goal is to cast the (``half-'') Mahler measures of $P_{a,c}(x,y)$ in terms of elliptic regulators
and present some evidence for the former to be rationally related to the $L$-value of the underlying curve $E_{a,c}:P_{a,c}(x,y)=0$.
The second goal has a more technical favour: computing certain tame symbols associated with the integral that appears in Theorem~\ref{th2}.
This computation is an essential part of our proof of the theorem in Section~\ref{modular}. More specifically, the tame symbols will dictate
the choice of the integration path in Lemma~\ref{lem2/7}.

Throughout the section we take $a$ and $c$ such that $a^2,c^2 \in \Q$ so that the curve $E_{a,c}$ is defined over~$\Q$.

First we recall the relationship of the Mahler measure to the regulator and tame symbols.

Matsumoto's theorem yields a simple expression for the second $K$-group of a field $F$:
\begin{align*}
K_2(F)
&\cong F^\times\otimes_{\Z}F^\times/\langle x\otimes(1-x):x\in F,\ x\ne0,1\rangle
\\
&\cong \wedge^2 F^\times /\langle x\otimes(1-x):x\in F,\ x\ne0,1\rangle.
\end{align*}
For a discrete valuation $v$ and a maximal ideal $\mathcal{M}$,
the tame symbol of $\{x,y\}\in K_2(F)$ at $v$ is given by
\[
(x,y)_v \equiv (-1)^{v(x)v(y)} \frac{x^{v(y)}}{y^{v(x)}} \bmod \mathcal{M}
\]
(see \cite{RV}).

Let $E/\Q$ be an elliptic curve. In the case when $F=\mathbb{Q}(E)$, each point $S \in E(\bar{\mathbb{Q}})$
determines a valuation by considering the order of the rational functions at~$S$. Denote this valuation by $v_S$.
An element $\{x,y\} \in K_2(\mathbb{Q}(E))\otimes\mathbb{Q}$ can be seen as an element in $K_2(E)\otimes\mathbb{Q}$
whenever $(x,y)_{v_S}=1$ for all $S \in E(\bar{\mathbb{Q}})$. This is the case when we consider the symbol $\{x,y\}$
in $K_2(\Q(E_{a,c}))$ when $a=1$, as this is a tempered family by a result from~\cite{RV} and therefore it satisfies the triviality of tame symbols.
However, this is not the case for the symbol $\{x,y\}$ in $K_2(\Q(E_{a,c}))$ for general $a$.
In fact, we have the following result which will be shown later in this section.

\begin{lemma}\label{tame}
On the curve $E_{a,c}$ we have
\[
|(x,y)_P|=|(x,y)_{-Q}|=\frac{1}{a},
\qquad
|(x,y)_{P+Q}|=|(x,y)_O|=a;
\]
all the other tame symbols $(x,y)_R$ are trivial.
\end{lemma}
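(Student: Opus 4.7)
The plan is to reduce the tame-symbol computation to divisor arithmetic on $E_{a,c}$ in Weierstrass form. First I would compute the divisors of the rational functions $x$ and $y$ given in \eqref{eq:Wtransf}. The key input is the factorization
\[
(cX-2Y)(cX+2Y) = c^2X^2 - 4Y^2 = -4X(X-1)(X-a^2),
\]
which follows directly from the Weierstrass equation. Since $cX - 2Y$ has a pole of order $3$ at $O$ and vanishes at $P=(1,c/2)$ and $Q=(a^2,ca^2/2)$, Abel's theorem forces its third zero to lie at $-(P+Q)=T$, where $T=P+Q=(0,0)$ is $2$-torsion (so $-T=T$); symmetrically, $cX + 2Y$ vanishes at $-P$, $-Q$, $T$. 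Combining this with $\operatorname{div}(X)=2[T]-2[O]$, $\operatorname{div}(X-1)=[P]+[-P]-2[O]$, $\operatorname{div}(X-a^2)=[Q]+[-Q]-2[O]$ yields
\[
\operatorname{div}(x) = [P] + [O] - [T] - [-Q], \qquad \operatorname{div}(y) = [-Q] + [O] - [T] - [P].
\]

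Consequently, the only points where either valuation is nonzero are the four points $P$, $-Q$, $T$, $O$, and at each of them both $v_S(x)$ and $v_S(y)$ equal $\pm 1$. At every other point of $E_{a,c}(\bar\Q)$ both valuations vanish, so the tame symbol is automatically $1$. At the four special points the tame-symbol formula collapses to evaluating (up to a sign) one of $xy$, $y/x$, or $x/y$ at $S$. The miraculous simplification here is
\[
xy = \frac{a(cX-2Y)(cX+2Y)}{4X^2(X-1)(X-a^2)} = -\frac{a}{X},
\]
which instantly gives $(x,y)_P = -(xy)^{-1}|_P = 1/a$ and $(x,y)_{-Q} = -xy|_{-Q} = 1/a$.

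The two remaining cases require brief local analysis. At $T=(0,0)$ the uniformizer is $Y$, and the Weierstrass equation gives $X\sim Y^{2}/a^{2}$; expanding $x$ and $y$ to leading order (equivalently, using $y/x = -a/(x^{2}X)$) yields $(x,y)_T = -y/x|_T = a$. At $O$ I would take a local parameter $t$ with $X\sim 1/t^{2}$, $Y\sim 1/t^{3}$, so that $x\sim -at$ and $y\sim t$ to leading order, giving $(x,y)_O = -x/y|_O = a$. The main obstacle is the bookkeeping of orders and signs at the $2$-torsion point $T$ and at infinity, but the identity $xy=-a/X$ does most of the heavy lifting and keeps the calculation uniform.
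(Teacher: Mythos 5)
Your proof is correct and follows essentially the same route as the paper's: compute the divisors $(x)$ and $(y)$ from the Weierstrass substitution, note that the tame symbol is automatically trivial off their common support $\{P,-Q,P+Q,O\}$, and evaluate the resulting expressions ($xy=-a/X$, $y/x$, $x/y$) at those four points. The paper's own argument is just a terser version of the same computation, stating the divisors of $X$, $X-1$, $X-a^2$, $cX\pm2Y$ without the Abel's-theorem justification you supply.
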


The regulator map of Bloch and Beilinson \cite{Be,B} may be defined by
\[
r \colon K_2(E) \to H^1(E,\mathbb{R}),
\quad
\{x,y\}\mapsto \left\{ \gamma \to \int_\gamma \eta(x,y)\right \},
\]
for $\gamma \in H_1(E,\mathbb{Z})$.

\begin{remark}
\label{rem-K2}
Technically, the regulator is defined over $K_2(\mathcal{E})$ where $\mathcal{E}$ is the N\'eron model of the elliptic curve~$E$;
$K_2(\mathcal{E})$ may be considered, up to torsion, as a subgroup of $K_2(E)$ determined by finitely many extra conditions given
by the primes of bad reduction of~$E$ (see~\cite{BG}).
\end{remark}

When $a=1$, after the work of Deninger \cite{Deninger} and Rodriguez-Villegas \cite{RV} we can write
\begin{equation}
\m(P_{1,c})=-\frac{1}{2\pi}r_c(\{x,y\})([\gamma]),
\label{mP1c}
\end{equation}
where $r_c$ denotes the regulator on $E_{1,c}$ and $\gamma$ is a generator of $H_1(E_{1,c},\Z)^-$, the homology group generated by
the cycles that change signs under complex conjugation of the coordinates $x$ and $y$, in this case given by the cycle $|x|=1$, $|y_-|\ge1$.

Let $\Z[E(\C)]^-$ be the group of divisors on $E$ modulo divisors of the form $(a)+(-a)$ for $a \in E(\C)$.
To two rational functions $f,g \in \C(E)^\times$ with divisors $(f)=\sum_i m_i(a_i)$ and $(g)=\sum_j n_j(b_j)$ the diamond operator
\[
\diamond\colon \wedge^2 \C(E)^\times \to [\Z(E(\C)]^-
\]
assigns the divisor
\[
(f)\diamond (g) =\sum_{i,j} m_in_j(a_i-b_j).
\]
The elliptic dilogarithm is a certain Eisenstein--Kronecker series that satisfies
\[
\int_\gamma \eta(f,g)=D_E\bigl((f)\diamond (g)\bigr),
\]
where $\gamma$ is a generator of $H_1(E,\Z)^-$; details can be found, for example, in \cite{Deninger, LR, RV}.
For the purpose of this paper we only use that the elliptic dilogarithm is a function of $E$ and $(f)\diamond (g)$, in the same spirit as in \cite{RV2}.

As explained in \cite[Lemma on p.~25]{RV} we have that
\[
\Res_v\eta(f,g)=\log|(f,g)_v|.
\]
Thus, these residues contribute to the integrals in the general case of $E_{a,c}$, where some tame symbols are not trivial.
Following the argument similar to that of \cite[pp.~272--274]{Deninger} one sees
that the paths of integration $[S_-,S_+]$ and $[T_-,T_+]$ can again be interpreted in terms of elements in $H_1(E_{a,c},\Q)$.
To show that they are in $H_1(E_{a,c},\Q)^-$, consider the form
\[
\omega=\frac{\d X}{2Y}=-\frac{\d x}{2x(y-y^{-1})}.
\]
{}From the construction of path $[S_-,S_+]$ we see that $\int_{[S_-,S_+]} \omega$ is purely imaginary, thus $[S_-,S_+]\in H_1(E_{a,c},\Q)^-$.
The same reasoning applies to the path $[T_-,T_+]$. Furthermore, assuming that
$S_\pm$ and $T_\pm$ are points of order $N$, we have the arithmetically stronger versions
$[S_-,S_+],[T_-,T_+]\in H_1(E_{a,c},(1/N)\Z)^-$.

Therefore, we have the following observation.

\begin{lemma}
\label{lem3}
Let $f, g \in \Q(E_{a,c})$ be such that any valuation of their tame symbol is an integral \textup(possibly zero\textup) power of~$a$.
Then there exist rational numbers $p$ and $q$ such that
\[
\frac1{2\pi}\int_{[S_-,S_+]}\eta(f,g) = \frac{q}{2\pi}\,D_{E_{a,c}}\bigl((f)\diamond (g)\bigr)+p\,\log a,
\]
where $q$ does not depend on the choice of $f,g$, and the similar statement \textup(with possibly different $p$ and $q$\textup)
is true for the integral over $[T_-,T_+]$.

Furthermore, if $S_\pm, T_\pm$ are points of order $N$ then both $Np$ and $Nq$ are integers.
\end{lemma}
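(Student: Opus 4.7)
Our plan is to interpret the paths $[S_-,S_+]$ and $[T_-,T_+]$ as rational multiples of a fixed generator of $H_1(E_{a,c},\Q)^-$ and then modify the symbol $\{f,g\}$ so as to kill its tame symbols at the cost of a correction proportional to $\log a$. We spell out the argument for $[S_-,S_+]$; the case of $[T_-,T_+]$ is entirely analogous.

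First, as already recorded in the text preceding the lemma, the chain $[S_-,S_+]$ defines a class in $H_1(E_{a,c},\Q)^-$, and when $S_\pm$ are $N$-torsion it even lies in $H_1(E_{a,c},(1/N)\Z)^-$. Fixing a generator $\gamma_0$ of $H_1(E_{a,c},\Z)^-$, we write $[S_-,S_+]=q\,\gamma_0$ for some $q\in\Q$ depending only on the cycle (not on $f,g$), which by the torsion observation satisfies $Nq\in\Z$.

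If $\{f,g\}$ had trivial tame symbols everywhere, it would lift to $K_2(\mathcal E_{a,c})\otimes\Q$ and the standard elliptic-dilogarithm formula
\[
\int_{\gamma_0}\eta(f,g)=D_{E_{a,c}}\bigl((f)\diamond(g)\bigr)
\]
would immediately deliver the lemma with $p=0$. In general, let $R_1,\dots,R_m$ be the points where $\{f,g\}$ has non-trivial tame symbol, say $(f,g)_{R_j}=\varepsilon_j\,a^{n_j}$ with $n_j\in\Z$; Weil reciprocity forces $\sum_j n_j=0$. The key observation is that for any $h\in\Q(E_{a,c})^\times$ one has $(a,h)_v=a^{v(h)}$ because the divisor of the constant $a$ vanishes, so by a suitable $\Q$-linear combination $\sum_k\beta_k\{a,h_k\}$ we can subtract off all tame symbols of $\{f,g\}$ and produce a class $\xi\in K_2(\mathcal E_{a,c})\otimes\Q$. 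Applying the regulator identity to $\xi$ along $q\gamma_0$ contributes $q\,D_{E_{a,c}}((f)\diamond(g))$ (the vanishing of $(a)$ making $(a)\diamond(h_k)=0$), while each correction integrates as
\[
\int_{[S_-,S_+]}\eta(a,h_k)=\log a\,\bigl(\arg h_k(S_+)-\arg h_k(S_-)\bigr)
\]
since $\d\arg a=0$. With $S_-=\overline{S_+}$ and each $h_k$ chosen so that its divisor is supported on the (algebraic) orbit of the $R_j$ and $S_\pm$ under the group law, the evaluations $h_k(S_\pm)$ are algebraic numbers whose arguments are rational over $2\pi$, giving $p\in\Q$; under the torsion hypothesis these arguments actually lie in $(2\pi/N)\Z$ via evaluations of modular units at torsion points, yielding $Np\in\Z$. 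That $q$ is independent of $f,g$ is built into the construction, since it is determined purely by the homology class of $[S_-,S_+]$.

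The principal obstacle will be the concrete construction of the $h_k$'s, because Abel's theorem limits which divisors are principal on $E_{a,c}$: when $\sum_j n_j(R_j)$ fails to be principal we must pass to an integer multiple that is (always possible in the torsion setting, since the torsion divisors generate a finite group), and this is what enlarges the denominator of $p$. In the application of Section~\ref{modular}, the non-trivial-tame-symbol locus identified in Lemma~\ref{tame} together with $S_\pm$ from Remark~\ref{rem-specials} lies in a small torsion subgroup of $E_{\sqrt7,3}$, so the construction can be carried out explicitly and the integrality of $Np$ and $Nq$ is visible on the nose.
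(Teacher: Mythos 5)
Your overall strategy differs from the paper's. The paper never modifies the symbol $\{f,g\}$: it combines the residue formula $\Res_v\eta(f,g)=\log|(f,g)_v|$ (an integer multiple of $\log a$ by hypothesis) with the facts that $[S_-,S_+]\in H_1(E_{a,c},(1/N)\Z)^-$ and that $\int_{\gamma_0}\eta(f,g)=D_{E_{a,c}}\bigl((f)\diamond(g)\bigr)$ over an integral generator $\gamma_0$ of $H_1(E_{a,c},\Z)^-$; the discrepancy between $\int_{[S_-,S_+]}\eta(f,g)$ and $q\int_{\gamma_0}\eta(f,g)$ is then a $(1/N)\Z$-combination of $2\pi$ times the residues, which is exactly the term $2\pi p\log a$ with $Np\in\Z$. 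Your route\,---\,correcting $\{f,g\}$ by $\sum_k\beta_k\{a,h_k\}$ so as to land in $K_2(\mathcal E_{a,c})\otimes\Q$\,---\,is appealing but has two genuine gaps.

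First, cancelling the tame symbols requires $\sum_k\beta_k(h_k)=\sum_j n_j(R_j)$ in $\mathrm{Div}(E_{a,c})\otimes\Q$, which is possible precisely when the point $\sum_j n_jR_j$ (sum under the group law) is torsion. This is not implied by the hypothesis of the lemma, and it fails in the paper's own application: for $\{x,y\}$ on $E_{a,c}$, Lemma~\ref{tame} gives $\sum_j n_jR_j=-P-(-Q)+(P+Q)+O=2Q$, a point of infinite order for generic $a^2,c^2\in\Q$. So your construction cannot deliver Proposition~\ref{prop:reg} in its stated generality, only under extra torsion hypotheses such as~\eqref{c0}, which you acknowledge but do not resolve. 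Second, your justification of $p\in\Q$ is not sound: $\int_{[S_-,S_+]}\d\arg h_k$ is the total variation of the argument, which differs from $\arg h_k(S_+)-\arg h_k(S_-)$ by $2\pi$ times a winding number, and, more seriously, an algebraic number need not have argument in $2\pi\Q$ (e.g.\ $\arg(1+2i)=\arctan 2$), so the claimed rationality and the $N$-integrality of $Np$ are unsupported. (A further slip: letting the divisors of the $h_k$ meet $S_\pm$ would create new non-trivial tame symbols there and make $h_k(S_\pm)\in\{0,\infty\}$.) The clean way to get both rationality and $N$-integrality is the paper's: every correction is a residue, hence an integer multiple of $2\pi\log a$, weighted by the $(1/N)\Z$-coefficients coming from the homology class of the path.
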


To compute the divisors $(x)$ and $(y)$, we first perform the computation of the divisors of some rational
functions in the Weierstrass form of the curve:
\begin{align*}
(X)&=2(P+Q)-2O,\\
(X-1)&=(P)+(-P)-2O,\\
(X-a^2)&=(Q)+(-Q)-2O,\\
(cX+2Y)&=(P+Q)+(-Q)+(-P)-3O,\\
(cX-2Y)&=(P+Q)+(Q)+(P)-3O.
\end{align*}
By referring to equations \eqref{eq:Wtransf} we obtain
\[
(x)=-(P+Q)+(P)-(-Q)+O \quad\text{and}\quad (y)=-(P+Q)-(P)+(-Q)+O;
\]
thus, the diamond operation of the divisors is given by
\begin{equation}\label{xy}
(x)\diamond(y)=4(P)+4(Q).
\end{equation}

The knowledge of the divisors $(x)$ and $(y)$ allows us to compute the tame symbols.

\begin{proof}[Proof of Lemma~\textup{\ref{tame}}]
It suffices to concentrate on the tame symbols over points that
are supported in the divisors of $x,y$, since the tame symbol is trivial for the other points:
\begin{align*}
|(x,y)_P|&=\left|\frac{1}{xy}\right|_P=\left|\frac{X}{a}\right|_{P}=\frac{1}{a},\\
|(x,y)_{-Q}|&=\left|xy\right|_{-Q}=\left|\frac{a}{X}\right|_{-Q}=\frac{1}{a},\\
|(x,y)_{P+Q}|&=\left|\frac{y}{x}\right|_{P+Q}=\left|\frac{(cX+2Y)(X-a^2)}{a(cX-2Y)(X-1)}\right|_{P+Q}=a,\\
|(x,y)_O|&=\left|\frac{x}{y}\right|_O= \left|\frac{a(cX-2Y)(X-1)}{(cX+2Y)(X-a^2)}\right|_{O}= a.
\qedhere
\end{align*}
\end{proof}

We can now summarize the outcomes of Lemmas~\ref{tame} and~\ref{lem3} in the following form.

\begin{proposition}\label{prop:reg}
Under the rationality conditions $a^2,c^2\in\Q$ for $a,c$, there exist $p_1,q_1,p_2,q_2\in\Q$ such that
\begin{align*}
\frac1{2\pi}\int_{[S_-,S_+]}\eta(x,y) &= \frac{q_1}{2\pi}\,D_{E_{a,c}}\bigl((x)\diamond(y)\bigr)+p_1\log a,
\\
\frac1{2\pi}\int_{[T_-,T_+]}\eta(x,y) &= \frac{q_2}{2\pi}\,D_{E_{a,c}}\bigl((x)\diamond(y)\bigr)+p_2\log a.
\end{align*}

Furthermore, if $S_\pm, T_\pm$ are points of order $N$ then $Np_1,Nq_1,Np_2,Nq_2\in\Z$.
\end{proposition}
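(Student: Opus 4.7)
The plan is to read Proposition~\ref{prop:reg} as a direct specialization of Lemma~\ref{lem3} to the pair $(f,g)=(x,y)$ on the curve $E_{a,c}$, using the tame symbol computation of Lemma~\ref{tame} to verify the hypothesis. Under the standing assumption $a^{2},c^{2}\in\Q$, the Weierstrass equation $Y^{2}=X(X^{2}+(c^{2}/4-1-a^{2})X+a^{2})$ has rational coefficients, and the rational functions $x$ and $y$ expressed by \eqref{eq:Wtransf} belong to $\Q(a,c)(E_{a,c})$; in particular they furnish a well-defined symbol $\{x,y\}$ to which Lemma~\ref{lem3} can be applied provided its tame-symbol hypothesis is met.

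That hypothesis is exactly the content of Lemma~\ref{tame}: the only valuations at which $(x,y)_{v}$ is nontrivial are $v_{P}$, $v_{-Q}$, $v_{P+Q}$, $v_{O}$, and at these points $|(x,y)_{v}|\in\{a,a^{-1}\}$, hence is an integer power of~$a$. Therefore the conclusion of Lemma~\ref{lem3} applies verbatim to $(x,y)$ on the integration cycle $[S_{-},S_{+}]$, producing rationals $p_{1},q_{1}$ with
\[
\frac{1}{2\pi}\int_{[S_{-},S_{+}]}\eta(x,y)=\frac{q_{1}}{2\pi}\,D_{E_{a,c}}\bigl((x)\diamond(y)\bigr)+p_{1}\log a,
\]
and likewise on $[T_{-},T_{+}]$ with rationals $p_{2},q_{2}$. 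The divisor $(x)\diamond(y)=4(P)+4(Q)$ computed in \eqref{xy} is already available, so no further work is needed on that side. The final integrality claim—that $Np_{1},Nq_{1},Np_{2},Nq_{2}\in\Z$ when $S_{\pm},T_{\pm}$ are $N$-torsion—is the corresponding integrality assertion of Lemma~\ref{lem3}, transported directly. The only potential technical wrinkle is confirming that the residue contributions arising from $\Res_{v}\eta(x,y)=\log|(x,y)_{v}|$ combine, over each cycle, into a rational multiple of $\log a$; but this is precisely what Lemma~\ref{lem3} is designed to package, and no additional calculation is required beyond invoking it.
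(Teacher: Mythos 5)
Your proposal matches the paper's own treatment: Proposition~\ref{prop:reg} is stated there precisely as the combination of Lemma~\ref{tame} (which verifies that all nontrivial tame symbols of $\{x,y\}$ have absolute value $a^{\pm1}$) with Lemma~\ref{lem3} applied to $(f,g)=(x,y)$ over the cycles $[S_-,S_+]$ and $[T_-,T_+]$, including the $N$-torsion integrality clause. Nothing further is needed.
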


Still without identifying the rational function $y$ on $E_{a,c}$ with either $y_-$ or $y_+$,
we see that the integrals in Proposition~\ref{prop:reg} are, up to sign, the ``half-Mahler'' measures
$\m^-(P_{a,c})$ and $\m^+(P_{a,c})$. Therefore, the result can be interpreted as follows:
each of the measures is a $\Q$-linear combination of the elliptic dilogarithm $D_{E_{a,c}}((x)\diamond(y))/(2\pi)$ and $\log a$.
At the same time, the conjectures of Bloch and Beilinson \cite{Be,B} predict that $K_2(E_{a,c})$ has rank one and, thus,
the dilogarithmic quantity is a rational multiple of $L'(E_{a,c},0)$,
so that both $\m^-(P_{a,c})$ and $\m^+(P_{a,c})$ are expected to be $\Q$-linear combinations of the $L$-value and $\log a$.

\section{Isogeny}\label{isogeny}

In this section we concentrate on the curve $E_{a,c}:P_{a,c}(x,y)=0$ subject to the condition \eqref{c0}. Its Weierstrass form is
$$
Y^2=X\biggl(X-\frac{2a^2}{a^2+1}\biggr)\biggl(X-\frac{a^2+1}2\biggr).
$$
One can construct a degree 2 isogeny $\varphi$ between the latter and
\begin{equation}
{Y'}^2=X'\biggl({X'}^2+\frac{2(a^4-6a^2+1)}{(a^2+1)^2}X'+1\biggr)
\label{cv}
\end{equation}
given by
$$
X'=\frac{2(a^2+1)Y^2}{((a^2+1)X-2a^2)^2}, \quad Y'=\frac{2^{3/2}Y}{(a^2+1)^{3/2}}\biggl(1+\frac{a^2(a^2-1)^2}{((a^2+1)X-2a^2)^2}\biggr).
$$
Note that \eqref{cv} is the Weierstrass form of Boyd's curve $P_{1,k}(x',y')=0$, where $k=4(a^2-\nobreak1)/\allowbreak(a^2+\nobreak1)$.
We record here some relevant relations between the points that we have discussed in Section~\ref{Weierstrass}:
\[
\varphi(P)=\varphi(Q)=P'=2S_\pm'
\]
and
\[
\varphi(S_+)=\varphi(T_+)=S_+', \qquad \varphi(S_-)=\varphi(T_-)=S_-'.
\]

Our principal task in this section is to prove a weaker version of Theorem~\ref{th1}.
This proof enriches us with crucial understanding of tame symbols that will be needed in Section~\ref{modular}
as well as clarifies the relationship between $y$ and~$y_\pm$.  Theorem~\ref{th1} will be proven completely in Section \ref{diff}.

\begin{proposition}\label{prop:regul}
Under the above rationality conditions for $a,c$, together with \eqref{c0}, there exists $p\in(1/8)\Z$ such that
\begin{align}
\label{etaS}
\frac1{2\pi}\int_{[S_-,S_+]}\eta(x,y)&=\frac{1}{8\pi}\int_{[S_-',S_+']} \eta(x',y')+p\,\log a,
\\
\label{etaT}
\frac1{2\pi}\int_{[T_-,T_+]}\eta(x,y)&= \frac{1}{8\pi}\int_{[S_-',S_+']} \eta(x',y')+(p-1)\,\log a.
\end{align}
\end{proposition}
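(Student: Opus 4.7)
The plan is to exploit the functoriality of the regulator under the isogeny $\varphi$. Bilinearity of $\eta$ together with the change-of-variables identity $\varphi^*\eta(x',y')=\eta(\varphi^*x',\varphi^*y')$ yields
\[
\int_\gamma \eta(\varphi^*x',\varphi^*y') = \int_{\varphi_*\gamma} \eta(x',y')
\]
for every $\gamma\in H_1(E_{a,c},\Q)$. Since $\varphi(S_\pm)=\varphi(T_\pm)=S_\pm'$, both $\varphi_*[S_-,S_+]$ and $\varphi_*[T_-,T_+]$ coincide with $[S_-',S_+']$, giving
\[
\int_{[S_-,S_+]}\eta(\varphi^*x',\varphi^*y') = \int_{[T_-,T_+]}\eta(\varphi^*x',\varphi^*y') = \int_{[S_-',S_+']}\eta(x',y').
\]
Both \eqref{etaS} and \eqref{etaT} will follow once we compare $\eta(\varphi^*x',\varphi^*y')$ to $\eta(x,y)$ as differential forms on $E_{a,c}$.

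The crux is to carry out this comparison by computing $\varphi^*x'$ and $\varphi^*y'$ explicitly (via the formulas for $\varphi$ together with the inverse transformation \eqref{eq:Wtransf} applied on both sides) and then expanding the symbol $\{\varphi^*x',\varphi^*y'\}$ bilinearly in $K_2(\Q(a)(E_{a,c}))\otimes\Q$. Modulo Steinberg relations and the identities $\{x,x\}=\{y,y\}=0$, I expect a decomposition of the form
\[
\{\varphi^*x',\varphi^*y'\} \;\equiv\; 4\{x,y\} + \sum_i \{c_i,h_i\} \pmod{\mathrm{Steinberg}},
\]
with constants $c_i\in\Q(a)^\times$ and rational functions $h_i\in\Q(a)(E_{a,c})^\times$. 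The coefficient $4$ is dictated by the divisor identity $\varphi_*\bigl((x)\diamond(y)\bigr)=\varphi_*(4(P)+4(Q))=8(P')=(x')\diamond(y')$ combined with the degree-$2$ pullback doubling. Passing to $\eta$, using $\d\arg c_i=0$, and absorbing those $c_i\in\Q^\times$ (which contribute no $\log a$), one obtains, modulo an exact form,
\[
\eta(\varphi^*x',\varphi^*y') = 4\,\eta(x,y) + \log a\cdot\sum_i m_i\,\d\arg h_i, \qquad m_i\in\Z.
\]

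Integrating this identity over $[S_-,S_+]$ and combining with the functoriality formula yields \eqref{etaS} with
\[
p = -\frac{1}{8\pi}\sum_i m_i\bigl(\arg h_i(S_+)-\arg h_i(S_-)\bigr),
\]
and the analogous computation over $[T_-,T_+]$ gives the coefficient of $\log a$ in \eqref{etaT}. Since $S_\pm$ and $T_\pm$ are $8$-torsion (as $2S_\pm=2T_\pm=P$ and $P$ has order $4$ under hypothesis~\eqref{c0}, cf.\ Remark~\ref{rem-specials}), the values $h_i(S_\pm)$ and $h_i(T_\pm)$ are algebraic numbers whose arguments lie in $(2\pi/8)\Z$, forcing both coefficients into $(1/8)\Z$. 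The extra assertion that the two coefficients differ by exactly $1$ is then established by direct comparison of the boundary terms: the $T$-endpoints are obtained from the $S$-endpoints by translation by the non-identity 2-torsion element $P-Q=(2a^2/(a^2+1),0)$ of $\ker\varphi$, and this translation induces a controlled $2\pi$ shift in the argument of a single $h_i$, accounting for precisely one $\log a$.

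The principal obstacle is this explicit $K_2$-bookkeeping: pulling $(x',y')$ back through $\varphi$ in the coordinates $(X,Y)$, reducing the resulting symbol via Steinberg relations to read off both the coefficient $4$ and the specific constants $c_i$, and finally tracking the arguments of the $h_i$ at the $8$-torsion endpoints both to verify the $(1/8)\Z$-integrality and to isolate the unit difference between \eqref{etaS} and \eqref{etaT}. This is complicated by the fact that $\{\varphi^*x',\varphi^*y'\}$ a priori lies only in $K_2(\Q(a)(E_{a,c}))\otimes\Q$, so the nontrivial tame symbols identified in Lemma~\ref{tame} also enter and must be handled with care (cf.\ Remark~\ref{rem-K2}).
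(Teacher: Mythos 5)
Your overall skeleton (push the path through $\varphi$, use $\varphi(S_\pm)=\varphi(T_\pm)=S_\pm'$, extract a factor coming from the divisor computation, and get $p\in(1/8)\Z$ from the $8$-torsion of the endpoints) is the paper's, but the crucial middle step has a genuine gap. You propose to prove a \emph{symbol-level} identity
$\{\varphi^*x',\varphi^*y'\}\equiv 4\{x,y\}+\sum_i\{c_i,h_i\}$ modulo Steinberg relations, and you justify the coefficient $4$ by the diamond identity $(x'\circ\varphi)\diamond(y'\circ\varphi)=4\,(x)\diamond(y)$. That inference is invalid: the diamond map $\wedge^2\C(E)^\times\to\Z[E(\C)]^-$ kills Steinberg elements but is far from injective, so equality of diamond divisors does not produce an identity in $K_2(\Q(E))\otimes\Q$ up to constant symbols (indeed, such ``injectivity up to trivial elements'' is essentially what Beilinson-type conjectures would give). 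You flag this computation as the ``principal obstacle'' but offer no reason it can be carried out, and the rest of your argument depends on it. The paper avoids this entirely: Lemma~\ref{lem3} (Bloch's elliptic dilogarithm) says that for the \emph{fixed} path $[S_-,S_+]$ one has $\frac1{2\pi}\int\eta(f,g)=\frac{q}{2\pi}D_{E_{a,c}}((f)\diamond(g))+p_{f,g}\log a$ with $q$ independent of $(f,g)$; applying this to both $(x,y)$ and $(x'\circ\varphi,y'\circ\varphi)$ (whose tame symbols are trivial by Corollary~\ref{tame-varphi}), the universal $q$ cancels and only the divisor-level identity \eqref{xy} and its pullback are needed. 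So the weaker, easily verified diamond identity suffices, and no Steinberg bookkeeping is required.

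Two secondary points. First, your justification of $p\in(1/8)\Z$ — that the arguments of $h_i$ at $8$-torsion points lie in $(2\pi/8)\Z$ — is unfounded; the value of a rational function at a torsion point has no such constraint on its argument. The paper's integrality comes from homology: $8[S_-,S_+]$ is a closed integral cycle (that is, $[S_-,S_+]\in H_1(E_{a,c},(1/8)\Z)^-$), so its period against $\eta$ is an integer combination of $D_{E_{a,c}}$-values and residues $2\pi\log|(f,g)_v|$, which by Lemma~\ref{tame} are integer multiples of $2\pi\log a$. Second, the shift by exactly $\log a$ between \eqref{etaS} and \eqref{etaT} does not need your kernel-translation argument: it is immediate from Lemma~\ref{loga} ($\m^-(P_{a,c})+\m^+(P_{a,c})=\log a$) together with \eqref{yy-} and \eqref{yy+}, once \eqref{etaS} is known.
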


In what follows we only discuss the equality \eqref{etaS} as the second one is the immediate consequence of
the expressions \eqref{yy-}, \eqref{yy+} and Lemma~\ref{loga}.
The apparent sign discrepancy between the above equations and the expected relationships
with $\m^\pm(P_{a,c})=\m(y-y_\pm)$ lies in the fact that we have $y$ instead of $y_\pm$. To go from $y$ to
$y_-$ one must either take $y_-=y$ or $y_-=1/y$ and they are different choices for $[S_-,S_+]$
and $[T_-,T_+]$; see Remark~\ref{rem 2} later in this section.

\begin{remark}
\label{remm}
The expression \eqref{mP1c} for $\m(P_{a,c})$ when $a=1$ as well as formulas \eqref{yy-} and \eqref{yy+} show that
Proposition~\ref{prop:regul} is indeed a weaker version of Theorem~\ref{th1}: the rational $p$ is specified in the latter to be~$3/4$.
The integrality of~$8p$ guaranteed by Proposition~\ref{prop:regul} gives, in fact, a practical recipe to compute the number by providing
a simple numerical approximation to~$p$; however the latter has to be done for each particular choice of $a,c$\,---\,the proposition
does not guarantee that the integer~$p$ is independent of the parameters.
\end{remark}

Before proceeding with the proof of Proposition \ref{prop:regul} we recall the following simple fact about
the image of tame symbols under an isogeny.

\begin{lemma}
\label{lll}
Let $E$ and $E'$ be elliptic curves over $\Q$ and $\varphi\colon E\to E'$ an isogeny.
Let $R$ be a point of $E(\C)$ and $R'=\varphi(R)$ its image on~$E'$. For $x,y \in \Q(E')$,
we have $x\circ \varphi, y\circ \varphi \in \Q(E)$ and
\[
(x\circ \varphi,y\circ \varphi)_R=(x,y)_{R'}.
\]
\end{lemma}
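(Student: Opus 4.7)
The plan is to prove Lemma~\ref{lll} by reducing it to a compatibility of valuations and uniformizers under the isogeny. Since $\varphi$ is a finite morphism of smooth projective curves defined over a field of characteristic zero, and moreover a homomorphism of group varieties, it is \'etale (unramified) at every point of $E$: the tangent map at any $R\in E(\C)$ is the translate of the tangent map at the origin, which is an isomorphism because $\ker\varphi$ is \'etale of finite order. In particular, if $t$ is a uniformizer in the local ring $\mathcal{O}_{E',R'}$, then $t\circ\varphi$ is a uniformizer in $\mathcal{O}_{E,R}$. This implies the valuation-preservation identity
\[
v_R(f\circ\varphi)=v_{R'}(f)\quad\text{for every }f\in\Q(E')^\times.
\]

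With this in hand, I would spell out both tame symbols directly from the definition. Put $m=v_{R'}(x)$ and $n=v_{R'}(y)$, and fix a uniformizer $t$ at $R'$. Write
\[
x=t^m u_1,\qquad y=t^n u_2,
\]
where $u_1,u_2\in\mathcal{O}_{E',R'}^\times$, so that $u_1(R'),u_2(R')$ are nonzero and finite. By definition of the tame symbol,
\[
(x,y)_{R'}\equiv(-1)^{mn}\,\frac{x^n}{y^m}\bmod\mathcal{M}_{R'}\;\equiv\;(-1)^{mn}\,\frac{u_1(R')^n}{u_2(R')^m}.
\]

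Next I would pull everything back along $\varphi$. Using $v_R(f\circ\varphi)=v_{R'}(f)$ we get $v_R(x\circ\varphi)=m$ and $v_R(y\circ\varphi)=n$, while
\[
x\circ\varphi=(t\circ\varphi)^m(u_1\circ\varphi),\qquad y\circ\varphi=(t\circ\varphi)^n(u_2\circ\varphi),
\]
with $t\circ\varphi$ a uniformizer at $R$ and $(u_i\circ\varphi)(R)=u_i(R')$ nonzero and finite. Applying the definition of the tame symbol on $E$ yields
\[
(x\circ\varphi,y\circ\varphi)_R\equiv(-1)^{mn}\,\frac{(u_1\circ\varphi)^n(R)}{(u_2\circ\varphi)^m(R)}=(-1)^{mn}\,\frac{u_1(R')^n}{u_2(R')^m},
\]
which matches the expression for $(x,y)_{R'}$ above. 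The statement that $x\circ\varphi,y\circ\varphi\in\Q(E)$ is automatic, since $\varphi$ is defined over $\Q$ and $x,y\in\Q(E')$.

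The only nontrivial ingredient is the \'etaleness of the isogeny, which I would simply cite (e.g.\ from Silverman's treatment of isogenies of elliptic curves in characteristic zero); everything else is routine manipulation of local parameters. The step that might look like an obstacle, namely what happens when $m$ or $n$ is negative, poses no real problem because the formulas are defined in the quotient field of the local ring, and the unit factors $u_i$ are still units there.
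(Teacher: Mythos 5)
Your proof is correct and follows essentially the same route as the paper's: unwind the definition of the tame symbol, use $v_R(f\circ\varphi)=v_{R'}(f)$, and evaluate the resulting unit at $R$ versus $R'$. The only difference is that you explicitly justify the valuation-preservation step via the unramifiedness of the isogeny (which the paper simply uses without comment), a worthwhile addition but not a different argument.
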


\begin{proof}
By definition,
\begin{align*}
(x\circ \varphi,y\circ \varphi)_R
&=(-1)^{v_R(x \circ \varphi) v_R(y \circ \varphi)} \frac{(x\circ \varphi)^{v_R(y\circ \varphi)}}{(y\circ \varphi)^{v_R(x\circ \varphi)}}\bigg|_R
\\ \intertext{(using $v_R(x \circ \varphi)=v_{R'}(x)$ and $v_R(y \circ \varphi)=v_{R'}(y)$)}
&=(-1)^{v_{R'}(x) v_{R'}(y)}\frac{(x\circ \varphi)^{v_{R'}(y)}}{(y\circ \varphi)^{v_{R'}(x)}}\bigg|_R
\\
&=(-1)^{v_{R'}(x) v_{R'}(y)}\frac{(x)^{v_{R'}(y)}}{(y)^{v_{R'}(x)}}\bigg|_{R'}
=(x,y)_{R'}.
\qedhere
\end{align*}
\end{proof}

\begin{corollary} \label{tame-varphi}
Under hypothesis \eqref{c0},
the tame symbols $(x'\circ \varphi,y'\circ \varphi)_v$ are trivial and $\{x'\circ \varphi,y'\circ \varphi\} \in K_2(E_{a,c})$.
\end{corollary}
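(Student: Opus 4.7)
The plan is to deduce this corollary directly from Lemma~\ref{lll} combined with the fact, established in \cite{RV}, that the family $P_{1,k}(x',y')$ is tempered. The curve $E'$ given by \eqref{cv} is the Weierstrass form of the model $P_{1,k}(x',y')=0$ with $k=4(a^2-1)/(a^2+1)$, so temperedness yields the vanishing of $(x',y')_{v'}$ at every discrete valuation $v'$ of $\Q(E')$, equivalently at every point of $E'(\bar{\Q})$.

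First I would record that under the hypothesis $a^2\in\Q$ (which \eqref{c0} implies for $c^2$ as well), the isogeny $\varphi$ and the functions $x',y'\in\Q(E')$ are all defined over $\Q$, so the pullbacks $x'\circ\varphi,\,y'\circ\varphi$ lie in $\Q(E_{a,c})$, as required for the hypothesis of Lemma~\ref{lll}. Next, for every point $R\in E_{a,c}(\bar{\Q})$, setting $R'=\varphi(R)\in E'(\bar{\Q})$, Lemma~\ref{lll} identifies the tame symbol of $\{x'\circ\varphi,\,y'\circ\varphi\}$ at $v_R$ with the tame symbol of $\{x',y'\}$ at $v_{R'}$. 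The latter is trivial by the temperedness of $P_{1,k}$, and since every valuation of $\Q(E_{a,c})$ corresponds to some $R\in E_{a,c}(\bar{\Q})$, all tame symbols of $\{x'\circ\varphi,\,y'\circ\varphi\}$ on $E_{a,c}$ vanish.

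By the criterion recalled at the start of Section~\ref{diamond}, this triviality is precisely what is needed to view $\{x'\circ\varphi,\,y'\circ\varphi\}\in K_2(\Q(E_{a,c}))\otimes\Q$ as an element of $K_2(E_{a,c})\otimes\Q$. The finer statement of Remark~\ref{rem-K2} concerning the N\'eron model $\mathcal{E}_{a,c}$ and the conditions at primes of bad reduction is inherited from the corresponding statement on $\mathcal{E}'$, because the symbol in question is a pullback from an element that already satisfies those conditions. I do not expect any substantial obstacle: the argument is essentially formal, and the only point deserving attention is tracking the $\Q$-rationality of $\varphi$, $x'$, and $y'$ so that Lemma~\ref{lll} applies in the form stated.
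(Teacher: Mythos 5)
Your argument is correct and is essentially the paper's own proof: both deduce the triviality of the tame symbols from the temperedness of $P_{1,k}$ on the target curve and then pull back through the isogeny via Lemma~\ref{lll}. Your additional remarks on the $\Q$-rationality of $\varphi,x',y'$ and on the N\'eron-model refinement are reasonable elaborations of details the paper leaves implicit.
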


\begin{proof}
The triviality of the tame symbol $\{x',y'\}$ in $K_2(\Q(E_{1,k}))$, where $k=4(a^2-1)/(a^2+1)$, follows from the fact
that the polynomial $P_{1,k}$ is tempered (see Section~\ref{diamond}). It remains to apply the isogeny $\varphi$ and Lemma~\ref{lll}.
\end{proof}

\begin{proof}[Proof of Proposition~\textup{\ref{prop:regul}}]
Condition~\ref{c0} and the computation in Section~\ref{Weierstrass} imply that the points $S_{\pm},T_{\pm}$ all have order~8.
As shown in Section~\ref{diamond}, the divisors of $(x)$ and $(y)$ are supported at the points $P$, $-Q$, $P+Q$ and $O$ on the curve $E_{a,c}$,
while their diamond operation is given in~\eqref{xy}.

Since the isogeny $\varphi$ sends the endpoints of the paths $[S_-,S_+]$ and $[T_-,T_+]$ in $E_{a,c}$ to
the endpoints of $[S_-',S_+']$ in $E_{1,k}$ and the tame symbols
$(x',y')_R$ and $(x'\circ \varphi, y'\circ \varphi)_R$ are trivial, we can write
\begin{equation*}
\begin{aligned}
\frac1{2\pi}\int_{[S_-,S_+]}\eta(x'\circ\varphi,y'\circ\varphi)&=\frac1{2\pi}\int_{[S_-',S_+']} \eta(x',y'),
\\
\frac1{2\pi}\int_{[T_-,T_+]}\eta(x'\circ\varphi,y'\circ\varphi)&=\frac1{2\pi}\int_{[S_-',S_+']} \eta(x',y').
\end{aligned}
\end{equation*}
In addition, applying the isogeny we discover that
\begin{equation*} 
(x'\circ\varphi)\diamond(y'\circ\varphi)=16(P)+16(Q)=4(x)\diamond(y).
\end{equation*}
Using now Lemma~\ref{lem3} we obtain
\begin{align*}
\frac1{2\pi}\int_{[S_-,S_+]}\eta(x,y)
&=\frac{q}{2\pi}\,D_{E_{a,c}}\bigl((x)\diamond(y)\bigr)+p\,\log a
\nonumber\\\
&=\frac{q}{2\pi}\cdot\frac14\,D_{E_{a,c}}\bigl((x'\circ \varphi)\diamond(y'\circ \varphi)\bigr)+p\,\log a
\nonumber\\
&=\frac{q}{8\pi}\int_{[S_-,S_+]}\eta(x'\circ\varphi,y'\circ\varphi)+p\,\log a
\nonumber\\
&=\frac{1}{8\pi}\int_{[S_-',S_+']}\eta(x',y')+ p\,\log a
\end{align*}
with $p\in(1/8)\Z$.
As noticed earlier, equality \eqref{etaT} follows from \eqref{etaS} and Lemma~\ref{loga}.
\end{proof}

\begin{remark}
\label{rem 2}
As observed at the end of Section~\ref{Weierstrass}, as $x \to 0$ and $x\to \infty$,
we have $y_+\to 0$ and $y_-\to \infty$, respectively. Here we see that the condition
$x\to0,\infty$ is satisfied over $P, -Q, P+Q$ and $O$. However, we see that $y \to 0$ for $-Q$
and $O$ while $y \to \infty$ for $P$ and $P+Q$. Because of this, we cannot establish
a connection between $y$ and $y_\pm$ for the entire set of points in $E_{a,c}$.
As defined in equations \eqref{pointsST}, $S_\pm$ and $T_\pm$ have $y$-coordinate with absolute value~1,
which in principle can be interpreted as either $y_+$ or $y_-$.
Combining the equations in Proposition~\ref{prop:regul}
shows that $y=y_+$ over $[S_-,S_+]$ while $y=y_-$ over $[T_-,T_+]$.
\end{remark}

\section{Relations of elliptic integrals}\label{diff}

In this section we establish Theorem~\ref{th1} by differentiation with respect to the \emph{continuous} real parameter $a>1$.

The following two auxiliary results about elliptic integrals precede our proof.

\begin{lemma}
\label{lem-EI1}
For $v$ real, $1<v<\sqrt2$, the following equality is valid:
\begin{align*}
&
\int_{3-2v^2}^1\frac{\d T}{\sqrt{(1-T^2)(T+2v^2-1)(T+2v^2-3)}}
\\ &\qquad
=\int_{(\sqrt{2-v^2}-v^2+1)/v}^1\frac{\d t}{\sqrt{(1-t^2)(v^2t^2+2(v^2-1)vt+v^4-v^2-1)}}.
\end{align*}
\end{lemma}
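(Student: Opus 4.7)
The plan is to reduce each side of the identity to Legendre's complete elliptic integral $K$ and match the two via Landen's descending transformation.

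On the left, I would first substitute $T = 1 - 2U$, factoring the quartic as $16\,U(1-U)(v^2-U)(v^2-1-U)$ and moving the integration to $U \in [0, v^2-1]$. A M\"obius substitution $X = v^2/(v^2-U)$ will then send the root at $U = v^2$ to infinity, reducing the integrand to the cubic $(X-1)(v^2-X)(v^2-(v^2-1)X)$ with $X \in [1,v^2]$. The trigonometric substitution $X = 1 + (v^2-1)\sin^2\theta$ converts this to $\int_0^{\pi/2} 2\,\d\theta/\sqrt{1-(v^2-1)^2\sin^2\theta}$, and, collecting the factor $\tfrac12$ left over from $\d T = -2\,\d U$, the left-hand side equals $K(v^2-1)$.

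On the right, I would factor the quadratic as $v^2(t-r_+)(t-r_-)$ with $r_\pm = (1-v^2 \pm \sqrt{2-v^2})/v$, so that the quartic has ordered real roots $-1 < r_- < r_+ < 1$ and the integration runs over the rightmost interval $[r_+,1]$. A parallel reduction, namely a M\"obius change $Y = 1/(t+1)$ sending $-1$ to infinity followed by a trigonometric substitution on the resulting cubic, brings the right-hand side into the form
\[
\frac{2}{v\sqrt{(1+r_+)(1-r_-)}}\, K(\kappa), \qquad \kappa^2 = \frac{(1-r_+)(1+r_-)}{(1+r_+)(1-r_-)}.
\]
Writing $\lambda = \sqrt{2-v^2}$, the identity $(v\pm\lambda)^2 = 2 \pm 2v\lambda$ will then give the perfect-square simplifications
\[
v^2(1+r_+)(1-r_-) = (v+\lambda)^2 - (1-v^2)^2 = (1+v\lambda)^2, \qquad v^2(1-r_+)(1+r_-) = (1-v\lambda)^2,
\]
so the right-hand side becomes $\dfrac{2}{1+v\lambda}\,K\!\left(\dfrac{1-v\lambda}{1+v\lambda}\right)$.

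To close, I would invoke Landen's descending transformation. For the modulus $k = v^2-1 \in (0,1)$ the complementary modulus is $k' = \sqrt{(2-v^2)v^2} = v\lambda$, which also lies in $(0,1)$ precisely when $1 < v < \sqrt 2$; the classical identity
\[
K(k) = \frac{2}{1+k'}\,K\!\left(\frac{1-k'}{1+k'}\right)
\]
then identifies $K(v^2-1)$ with the reduced right-hand side, proving the equality. The main obstacle I anticipate is the careful tracking of prefactors through the successive substitutions together with verifying the perfect-square identities $v^2(1 \pm r_+)(1 \mp r_-) = (1 \pm v\lambda)^2$; neither step is deep, but both reward some bookkeeping.
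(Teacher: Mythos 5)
Your proposal is correct and is essentially the paper's proof in different clothing: both sides are reduced to complete elliptic integrals (the left to $K(v^2-1)$, the right to $\tfrac{2}{1+v\lambda}K\bigl(\tfrac{1-v\lambda}{1+v\lambda}\bigr)$, which is exactly the paper's $F(w^2)$ with $w=\tfrac{1-v\lambda}{1+v\lambda}$), and the Landen descending transformation you invoke is precisely the hypergeometric quadratic transformation $F\bigl(4z/(1+z)^2\bigr)=(1+z)F(z^2)$ used in the paper. The only difference is bookkeeping: the paper performs the reduction in one step via M\"obius-in-$x^2$ substitutions, while you chain linear, M\"obius and trigonometric changes of variable.
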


\begin{proof}
By $L$ and $R$ denote the integrals on the left- and right-hand sides of the identity to be shown.
We will make use of the quadratic transformation \cite[Eq.~(3.1.11)]{AAR}
\begin{equation*}
F\biggl(\frac{4z}{(1+z)^2}\biggr)=(1+z)F(z^2)
\end{equation*}
of the hypergeometric function
\begin{equation*}
F(z)={}_2F_1\biggl(\begin{matrix}\frac{1}{2},\,\frac{1}{2}\\1\end{matrix}\biggm|z\biggr)
=\sum_{n=0}^\infty{\binom{2n}n}^2\biggl(\frac z{16}\biggr)^n
=\frac{2}{\pi}\int_0^1\frac{\d x}{\sqrt{(1-x^2)(1-zx^2)}}.
\end{equation*}

The substitution
\begin{equation*}
T=\frac{(v^2-1)x^2+3-2v^2}{(1-v^2)x^2+1}
\end{equation*}
in the integral defining $L$ translates it into
\begin{equation*}
L=\int_0^1\frac{\d x}{\sqrt{(1-x^2)(1-(v^2-1)^2x^2)}}
=\frac\pi2\,F\bigl((v^2-1)^2\bigr).
\end{equation*}
Similarly, the substitution
\begin{equation}
t=\frac{\beta(\alpha-1)x^2+\alpha(1-\beta)}{(\alpha-1)x^2+1-\beta},
\label{E:subs}
\end{equation}
with
$$
\alpha=\frac{\sqrt{2-v^2}-v^2+1}{v} \quad\text{and}\quad \beta=\frac{-\sqrt{2-v^2}-v^2+1}{v},
$$
in the integral for $R$ results in
\begin{align*}
R&=\frac2{\sqrt{1+2v\sqrt{2-v^2}+2v^2-v^4}}\int_{0}^1\frac{\d x}{\sqrt{(1-x^2)(1-w^2x^2)}}
\\
&=\frac{\pi F(w^2)}{\sqrt{1+2v\sqrt{2-v^2}+2v^2-v^4}},
\end{align*}
where
$$
w=\frac{1-2v\sqrt{2-v^2}+2v^2-v^4}{(v^2-1)^2}.
$$
Since
$$
\frac{4w}{(1+w)^2}=(v^2-1)^2 \quad\text{and}\quad 1+w=\frac{2}{\sqrt{1+2v\sqrt{2-v^2}+2v^2-v^4}},
$$
it follows from the quadratic transformation given above that $R=L$.
\end{proof}

\begin{lemma}
\label{lem-EI2}
For $v$ real, $1<v<\sqrt2$, we have
\begin{equation*}
v\int_{(\sqrt{2-v^2}-v^2+1)/v}^1\frac{(2t+v)\,\d t}{\sqrt{(1-t^2)(v^2t^2+2(v^2-1)vt+v^4-v^2-1)}}
=\frac{3\pi}2.
\end{equation*}
\end{lemma}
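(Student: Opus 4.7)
The plan is to prove the claimed identity by reducing the integral to a smooth family in $v$, evaluating it at $v=\sqrt{2}$, and showing it is constant in $v$.

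First, I would regularize the improper integral via the substitution $t=\alpha+(1-\alpha)\sin^2\xi$, where $\alpha=(s+1-v^2)/v$ and $\beta=(-s+1-v^2)/v$ (with $s=\sqrt{2-v^2}$) are the real roots of $Q(t)=v^2t^2+2v(v^2-1)t+v^4-v^2-1=v^2(t-\alpha)(t-\beta)$. Using the identities $(1-t)=(1-\alpha)\cos^2\xi$ and $(t-\alpha)=(1-\alpha)\sin^2\xi$, the factor $v$ out front cancels with the factor $v$ in $\sqrt{Q(t)}=v\sqrt{(t-\alpha)(t-\beta)}$, and, writing $I(v)$ for the left-hand side of the identity to be proved, one obtains
\[
I(v) = \int_0^{\pi/2} \frac{2\bigl(2\alpha+v+2(1-\alpha)\sin^2\xi\bigr)\,\d\xi}{\sqrt{\bigl(1+\alpha+(1-\alpha)\sin^2\xi\bigr)\bigl(\alpha-\beta+(1-\alpha)\sin^2\xi\bigr)}}.
\]
Now the integrand is smooth on $[0,\pi/2]\times(1,\sqrt{2}\,]$.

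Next I would evaluate at the degenerate endpoint $v=\sqrt{2}$: there $s=0$, $\alpha=\beta=-1/\sqrt{2}$, and $Q(t)=2(t+1/\sqrt{2})^2$, so that $(2t+v)/\sqrt{Q(t)}\equiv\sqrt{2}$ identically. Consequently
\[
I(\sqrt{2}) = 2\int_{-1/\sqrt{2}}^1 \frac{\d t}{\sqrt{1-t^2}} = 2\Bigl(\frac{\pi}{2}+\frac{\pi}{4}\Bigr) = \frac{3\pi}{2}.
\]

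Finally, I would show $dI/dv\equiv 0$ on $(1,\sqrt{2})$ by exhibiting a function $H(\xi,v)$ with $\partial_v G(\xi,v)=\partial_\xi H(\xi,v)$ and $H(0,v)=H(\pi/2,v)=0$, where $G(\xi,v)$ denotes the integrand of the previous display; differentiating under the integral then yields $dI/dv=[H]_0^{\pi/2}=0$. The existence of such $H$ is assured abstractly by the Gauss--Manin connection for the family of genus-one curves $E_v:y^2=(1-t^2)Q_v(t)$: the integration cycle $[\alpha,1]$ on the upper sheet varies continuously with $v$, and the $v$-derivative of the meromorphic 1-form $\omega_v=(2t+v)\,\d t/y$ differs from a scalar multiple of $\omega_v$ itself by a form exact on $E_v$. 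The main obstacle is producing $H$ explicitly, which reduces to solving a finite but intricate linear system for the coefficients of a polynomial ansatz in $\sin\xi,\cos\xi$ divided by the radicand in $G$; an alternative would be to combine the quadratic hypergeometric transformation of Lemma~\ref{lem-EI1} with a classical identity between the complete elliptic integrals of the first and third kind to reduce the claim to a standard evaluation.
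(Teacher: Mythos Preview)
Your overall strategy --- regularize, evaluate at a boundary point of the parameter interval, and show the $v$-derivative vanishes --- is exactly the paper's, and your evaluation at $v=\sqrt2$ (where $Q(t)$ degenerates to a perfect square and the integral collapses to an arcsine) is arguably cleaner than the paper's limit $r\to0^+$ in Legendre coordinates. The first two steps are fine.

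The genuine gap is in your third step. Your appeal to the Gauss--Manin connection does not deliver the claim you make. For the family $E_v:y^2=(1-t^2)Q_v(t)$, Gauss--Manin only tells you that $\nabla_v[\omega_v]$ lies in the (two-dimensional) de Rham cohomology of $E_v$, i.e.\ is some combination of a \emph{basis} of cohomology classes modulo exact forms; it does \emph{not} assert that $\nabla_v[\omega_v]$ is a scalar multiple of $[\omega_v]$ itself. Even if that eigen-direction property happened to hold, you would only obtain $I'(v)=c(v)I(v)$, not $I'(v)=0$, and you would still have to prove $c\equiv0$. Moreover, $\omega_v=(2t+v)\,\d t/y$ has simple poles with nonzero residues at the two points of $E_v$ at infinity, so it is a differential of the \emph{third} kind; its period over the cycle through $[\alpha,1]$ genuinely involves $\Pi$, not just $K$ and $E$, and no general principle forces the $v$-derivative of such a third-kind period to vanish. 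The paper carries out precisely the computation you postpone: it uses the substitution~\eqref{E:subs} to write the integral explicitly as a combination of $K(r^2)$ and $\Pi(f(r),r^2)$, differentiates via the classical relations for $\d K/\d z$, $\partial\Pi/\partial n$, $\partial\Pi/\partial z$ in terms of $K,E,\Pi$, and then checks by direct algebraic simplification that the result is identically~$0$. Your ``polynomial ansatz for $H$'' would amount to the same verification in different coordinates; until that linear system is actually solved and shown consistent, the proof is incomplete.
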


\begin{proof}
Our goal is to convert the elliptic integral on the left-hand side
into its Legendre canonical form, that is, to express it through the standard elliptic integrals
\begin{gather*}
K(z)=\int_0^1\frac{\d x}{\sqrt{(1-x^2)(1-z^2x^2)}}, \qquad
E(z)=\int_0^1\frac{\sqrt{1-z^2x^2}}{\sqrt{1-x^2}}\d x,
\\
\Pi(n,z)=\int_0^1\frac{\d x}{(1-nx^2)\sqrt{(1-x^2)(1-z^2x^2)}},
\end{gather*}
and then to compute the derivative. The complete elliptic integrals are hypergeometric functions,
$\Pi(n,z)$ is a two-variable Appell $F_1$ hypergeometric function, and it is classically known that all their (partial) derivatives
are given by means of themselves \cite[Chapter~19]{OLBC}. In particular,
\begin{align*}
\frac{\d}{\d z}K(z)&=\frac{1}{z(1-z^2)}E(z)-\frac{1}{z}K(z),\\
\frac{\partial}{\partial n}\Pi(n,z)&= \frac{nE(z)+(z^2-n)K(z)+(n^2-z^2)\Pi(n,z)}{2n(n-1)(z^2-n)},\\
\frac{\partial}{\partial z}\Pi(n,z)&= \frac{z}{(z^2-1)(n-z^2)}(E(z)+(z^2-1)\Pi(n,z)).
\end{align*}

Denote the integral on the left-hand side of the required equality by~$L$ and employ the notation from the proof of Lemma~\ref{lem-EI1}.
We again start with the substitution \eqref{E:subs} to express $L$ as
\begin{align*}
L
&=\frac{2v}{\sqrt{1+2v\sqrt{2-v^2}+2v^2-v^4}}
\int_{0}^1\biggl(2\biggl(\beta+\frac{(\alpha-\beta)(\beta-1)}{(\beta-1)-(\alpha-1)x^2}\biggr)+v\biggr)\frac{\d x}{\sqrt{(1-x^2)(1-w^2x^2)}}
\\
&=\frac{2v}{\sqrt{1+2v\sqrt{2-v^2}+2v^2-v^4}}\biggl((2\beta+v)K(w)+2(\alpha-\beta)\Pi\biggl(\frac{\alpha-1}{\beta-1},w\biggr)\biggr).
\end{align*}
Since $v$ ranges between $1$ and $\sqrt{2}$, it is immediate that $w\in (0,1)$.
Letting $r=\sqrt{w}$, we can manipulate the last expression above to get
\begin{equation}
L=(1-r)\bigl((1-r-2\tsqrt{r^2+1})K(r^2)+4\tsqrt{r^2+1}\,\Pi(f(r),r^2)\bigr),
\label{E:I1}
\end{equation}
where $f(r)=r(\sqrt{r^2+1}+1)(\sqrt{r^2+1}-r)$.
To compute the derivative of $L$ on the interval $(0,1)$ we apply the chain rule and the above formulas for differentiation:
\begin{align*}
\frac{\d}{\d r}K(r^2)
&=\frac{2}{r(1-r^4)}E(r^2)-\frac{2}{r}K(r^2),\\
\frac{\d}{\d r}\Pi(f(r),r^2)
&=\left(\frac{f'(r)}{2(f(r)-1)(r^4-f(r))}+\frac{2r^3}{(r^4-1)(f(r)-r^4)}\right)E(r^2)
\\ &\kern-10mm
+\frac{f'(r)}{2f(r)(f(r)-1)}K(r^2)+\left(\frac{(f(r)^2-r^4)f'(r)}{2f(r)(f(r)-1)(r^4-f(r))}+\frac{2r^3}{f(r)-r^4}\right)\Pi(f(r),r^2).
\end{align*}
This tedious computation results in $\d L/\d r=0$, hence $L$ must be constant on $(0,1)$.

By taking the limits of the both sides in~\eqref{E:I1} as $r\to0^+$, one sees immediately that
\begin{equation*}
L=-K(0)+4\Pi(0,0)=-\frac{\pi}{2}+\frac{4\pi}{2}=\frac{3\pi}{2},
\end{equation*}
as required.
\end{proof}

\begin{proof}[Proof of Theorem~\textup{\ref{th1}}]
Since this is true as $a\to1^+$, it is sufficient to establish the equality of the derivatives of both sides of~\eqref{eq+-} with respect to~$a$.

Under the constraint \eqref{c0} we take
$$
a=\frac{u^2+2u-1}{u^2-2u-1}, \quad
\frac c2=\frac{a^2-1}{\sqrt{2(a^2+1)}}=\frac{4u(u^2-1)}{(u^2+1)(u^2-2u-1)},
\qquad\text{where}\quad u>\sqrt2+1,
$$
so that
$$
t_-=-\frac{1+c/2}a=-\frac{u^4+2u^3-6u-1}{(u^2+1)(u^2+2u-1)},
\quad
t_+=\frac{1-c/2}a=\frac{u^4-6u^3+2u-1}{(u^2+1)(u^2+2u-1)},
$$
and differentiate \eqref{m-}, \eqref{m+} with respect to the parameter $u$ (also using that
$|y_-(t_+)|=|y_+(t_-)|=1$):
\begin{align*}
\frac{\partial\m^-(P_{a,c})}{\partial u}
&=-\frac{(u^2+1)^3J_-+(u^2+2u-1)(u^4-2u^3+2u^2+2u+1)I}{(u^2+1)^2(u^4-6u^2+1)},
\\
\frac{\partial\m^+(P_{a,c})}{\partial u}
&=\frac{(u^2+1)^3J_++(u^2+2u-1)(u^4-2u^3+2u^2+2u+1)I}{(u^2+1)^2(u^4-6u^2+1)},
\end{align*}
where
\begin{gather*}
J_-=\frac4\pi\int_{t_+}^1\frac{t\,\d t}{\sqrt{(1-t^2)(t-t_-)(t-t_+)}},
\qquad
J_+=\frac4\pi\int_{-1}^{t_-}\frac{t\,\d t}{\sqrt{(1-t^2)(t-t_-)(t-t_+)}},
\\
I=\frac4\pi\int_{t_+}^1\frac{\d t}{\sqrt{(1-t^2)(t-t_-)(t-t_+)}}
=\frac4\pi\int_{-1}^{t_-}\frac{\d t}{\sqrt{(1-t^2)(t-t_-)(t-t_+)}}.
\end{gather*}
The latter equality is the standard relation of the two real periods on an elliptic curve
and we also have
\begin{equation}
J_--J_+=4
\label{eq2}
\end{equation}
from differentiating identity \eqref{eqpm} with respect to~$a$.

Letting $k=4(a^2-1)/(a^2+1)$, we have
$$
\frac k2=\frac{2(a^2-1)}{a^2+1}
=\frac{8u(u^2-1)}{(u^2+1)^2},
$$
implying, on the basis of~\eqref{m-},
$$
\frac{\partial\m(P_{1,k})}{\partial u}
=-\frac{u^4-6u^2+1}{(u^2+1)^3}K,
$$
where
$$
K=\frac8\pi\int_{1-k/2}^1\frac{\d T}{\sqrt{(1-T^2)(T+k/2-1)(T+k/2+1)}}.
$$

To establish \eqref{eq+-} we need to prove that the derivatives of the both sides agree:
\begin{equation}
\frac{u^4-6u^2+1}{(u^2+1)^3}K
=\frac{(u^2+1)^3(J_-+3J_+)+4(u^2+2u-1)(u^4-2u^3+2u^2+2u+1)I}{(u^2+1)^2(u^4-6u^2+1)},
\label{eq1}
\end{equation}
while the latter equality follows from
\begin{align}
K&=\frac{2(u^2+1)}{u^2+2u-1}\,I,
\label{eq4}
\\
J_-+3J_+ &=-\frac{2(u^2+2u-1)}{u^2+1}\,I.
\label{eq3}
\end{align}

By switching the parameter $u>\sqrt2+1$ to $v=(u^2+2u-1)/(u^2+1)$, which ranges from $1$ to~$\sqrt2$, we can write
$$
(t-t_-)(t-t_+)
=t^2+\frac{2(v^2-1)}v\,t+\frac{v^4-v^2-1}{v^2},
\qquad
\frac k2=2(v^2-1),
$$
so that
\begin{align*}
K&=\frac{8}{\pi}\int_{3-2v^2}^1\frac{\d T}{\sqrt{(1-T^2)(T+2v^2-1)(T+2v^2-3)}},
\\
\frac{u^2+1}{u^2+2u-1}\,I&=\frac{4}{\pi}\int_{(\sqrt{2-v^2}-v^2+1)/v}^1\frac{\d t}{\sqrt{(1-t^2)(v^2t^2+2(v^2-1)vt+v^4-v^2-1)}},
\end{align*}
hence equality \eqref{eq4} follows from Lemma~\ref{lem-EI1}.
The change of the parameter in the integrals defining $J_-$ and $J_+$ together with relation~\eqref{eq2} and Lemma~\ref{lem-EI2}
show that equality \eqref{eq3} is true as well. This establishes the equality of the derivatives \eqref{eq1}
and concludes the proof of Theorem~\ref{th1}.
\end{proof}

\section{Ramanujan's modular parametrization}
\label{modular}

In this section we prove Theorem~\ref{th2}.

A particular entry of Ramanujan's notebooks \cite[p.~236, Entry 68]{Be94} says that
$$
AB+\frac 7{AB}=\biggl(\frac AB\biggr)^2+\biggl(\frac BA\biggr)^2-3,
$$
where $A=A(\tau)=\eta(\tau)/\eta(7\tau)$ and $B=B(\tau)=A(3\tau)$, and $\eta(\tau)$ denotes the Dedekind eta function.
(We believe that this other eta notation does not cause any confusion with~\eqref{etadef} as it depends here on a single variable.)
We can interpret Ramanujan's equation as the modular-unit parametrization
\begin{equation}
\label{unit1-2}
\begin{aligned}
x=\wt x(\tau)
&=\frac1{\sqrt7}\,\frac{\eta(\tau)\eta(3\tau)}{\eta(7\tau)\eta(21\tau)}
=\frac{1}{\sqrt{7}}g_1(\tau)g_2(\tau)g_3^2(\tau)g_4(\tau)g_5(\tau)g_6^2(\tau)g_8(\tau)g_9^2(\tau)g_{10}(\tau),
\\
y=\wt y(\tau)
&=-\biggl(\frac{\eta(\tau)\eta(21\tau)}{\eta(3\tau)\eta(7\tau)}\biggr)^2
=-(g_1(\tau)g_2(\tau)g_4(\tau)g_5(\tau)g_8(\tau)g_{10}(\tau))^2
\end{aligned}
\end{equation}
of the elliptic curve $P_{\sqrt7,3}(x,y)=0$ of conductor~21, where
\begin{equation*}
g_a(\tau)= q^{21B_2(a/21)/2}\prod_{\substack{n \geq 1 \\ n \equiv a \bmod 21}}(1-q^n)\prod_{\substack{n \geq 1 \\ n \equiv -a \bmod 21}}(1-q^n), \qquad q= e^{2\pi i \tau},
\end{equation*}
are level~21 modular units, and $B_2(x)= x^2-x+1/6$ is the second Bernoulli polynomial.

From here on, we let
\begin{equation*}
x_0(\tau)=\sqrt{7}\,\wt x(\tau)=g_1(\tau)g_2(\tau)g_3^2(\tau)g_4(\tau)g_5(\tau)g_6^2(\tau)g_8(\tau)g_9^2(\tau)g_{10}(\tau).
\end{equation*}

The rest of this section is devoted to proving Theorem~\ref{th2}.

\begin{lemma}
\label{lem2/7}
The following identity is true:
\begin{equation*}
\m^-(P_{\sqrt7,3})=\frac{1}{2\pi}\int_{2/7}^{i\infty}\eta(x_0(\tau),\wt y(\tau)).
\end{equation*}
\end{lemma}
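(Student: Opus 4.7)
My plan is to pull the regulator integral representing $\m^-(P_{\sqrt 7,3})$ back to the upper half-plane via the modular-unit parametrization \eqref{unit1-2} of $E_{\sqrt 7,3}$, and to match the resulting path with the geodesic from $\tau=2/7$ to $\tau=i\infty$.

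First, I would invoke \eqref{yy-} with $a=\sqrt 7$, $c=3$:
$$
\m^-(P_{\sqrt 7,3}) = \frac{1}{2\pi}\int_{[S_-,S_+]}\eta(y_-,x).
$$
Because $y_+y_-=1$ (the product of roots of $y^2+A(x)y+1=0$) and $y$ agrees with $y_+$ on $[S_-,S_+]$ by Remark~\ref{rem 2}, one has $\eta(y_-,x)=\eta(1/y,x)=\eta(x,y)$. Since $x=x_0/\sqrt 7$, the bilinearity of $\eta$ in its arguments yields
$$
\eta(x,y)=\eta(x_0,y)-(\log\sqrt 7)\,\d\arg y,
$$
so the remaining tasks are (i) pulling back $\int_{[S_-,S_+]}\eta(x_0,y)$ through $x=\wt x(\tau)$, $y=\wt y(\tau)$ to an integral on $X_0(21)$, and (ii) showing that the correction $(\log\sqrt 7)\int_{[S_-,S_+]}\d\arg y$ vanishes.

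For (i), I would identify the preimages of $S_\pm$ on $X_0(21)$ among the four cusps of $\Gamma_0(21)$, with representatives $\{i\infty,\,0,\,1/3,\,1/7\sim2/7\}$. These must map to the support $\{O,P,-Q,P+Q\}$ of $(x)$ and $(y)$ in the Weierstrass model computed in Section~\ref{Weierstrass}. The explicit matching is obtained by computing the cuspidal orders of $x_0$ and $\wt y$ via the Bernoulli-polynomial formula for the $g_a$ and comparing with those divisors. Then the tame-symbol data of Lemma~\ref{tame}---which, as flagged at the start of Section~\ref{diamond}, dictates the integration path---forces the endpoints of the pullback of $[S_-,S_+]$ to be precisely $2/7$ and $i\infty$, with the other pair of cusps corresponding to the companion path $[T_-,T_+]$. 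For (ii), the leading $q$-expansion $\wt y\sim-q$ shows $\wt y\to 0$ through negative real values at $\tau=i\infty$, while applying $\gamma=\bigl(\begin{smallmatrix}2&1\\7&4\end{smallmatrix}\bigr)\in\mathrm{SL}_2(\mathbb Z)$ with $\gamma(i\infty)=2/7$ to the $\eta$-product shows the same limiting behaviour at $\tau=2/7$; a suitable branch choice along the vertical geodesic then kills the correction.

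The principal obstacle is the cusp-matching step in (i): it requires both an explicit divisor calculation for the modular units $x_0$ and $\wt y$ and a careful homological identification of the Weierstrass segment $[S_-,S_+]$ with its lift to $X_0(21)$. Once this matching is secured, the rest is bookkeeping guided by the tame-symbol analysis of Section~\ref{diamond}, and the identity of the lemma drops out.
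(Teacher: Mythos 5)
There is a genuine gap at the central step. The endpoints of the path $[S_-,S_+]$ are \emph{not} cusps of $X_0(21)$: the points $\bar S_\pm$ have $y=-1$ and lift to the CM points $\tau_\pm=(\mp9+\sqrt{-3})/42$, so the pullback of $[S_-,S_+]$ under the parametrization \eqref{unit1-2} is the geodesic $[\tau_-,\tau_+]$ joining two CM points, not a path between $2/7$ and $i\infty$. The cusps $\{i\infty,0,1/3,2/7\}$ correspond to the points $\{-Q,O,P,P+Q\}$ in the support of $(x)$ and $(y)$ --- i.e.\ to the singularities of the form $\eta(x_0,\wt y)$ --- and the tame symbols of Lemma~\ref{tame} tell you which of these singularities carry nontrivial residues; they cannot ``force the endpoints of the pullback to be $2/7$ and $i\infty$.'' Your cusp-matching step (i) therefore proves the wrong thing: after it you would still be facing an integral over $[\tau_-,\tau_+]$, and the entire content of the lemma --- deforming that integral into $\int_{2/7}^{i\infty}$ --- is missing. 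In the paper this deformation is done by splitting $[\tau_-,\tau_+]$ at $2/7$, applying the Atkin--Lehner involution $W_7$ (which sends $\tau_+\mapsto\tau_-$, $0\mapsto 2/7$ and $\eta(x_0,\wt y)\mapsto\eta(x_0/7,\wt y)$) to one piece, and then using that $\wt y$ is real on the geodesic $[\tau_+,i/\sqrt{21}]$ and that $x_0$, $x_0/7$, $\wt y$ are real on the imaginary axis to discard or convert the remaining pieces; the residue computation only enters to certify that the pieces being discarded do not pick up contributions from the bad cusps.

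Two smaller points. First, the identification of $[\tau_-,\tau_+]$ with $[S_-,S_+]$ (and of $\wt y$ with $y_+=1/y_-$ there) is itself not free: the paper obtains it from the action of $W_{21}$, which shows $|\wt x|=1$ and $\wt y$ real along that geodesic. Second, your correction term $(\log\sqrt7)\int_{[S_-,S_+]}\d\arg y$ vanishes precisely because $\wt y$ is real along $[\tau_-,\tau_+]$, not because of the limiting behaviour of $\wt y$ at the cusps $i\infty$ and $2/7$, which do not lie on that path. So the reality statement you need for (ii) is the same $W_{21}$ computation you skipped in (i).
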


\begin{proof}
Note that for the four CM points on the upper half-plane,
$$
\tau_\pm=\frac{\mp 9+\sqrt{-3}}{42}
\quad\text{and}\quad
\tau_\pm'=\frac{\mp 9+\sqrt{-3}}{21},
$$
we have
$$
(\wt x(\tau_\pm),\wt y(\tau_\pm))=\bar S_\pm
\quad\text{and}\quad
(\wt x(\tau_\pm'),\wt y(\tau_\pm'))=\bar T_\pm,
$$
where the latter points \eqref{pointsST} lie on the elliptic curve $P_{\sqrt7,3}(x,y)=0$.

The Atkin--Lehner involution
\[W_{21}=\frac{1}{\sqrt{21}}\left(\begin{array}{cc} 0&-1\\21&0 \end{array}\right)\]
acts on the modular functions as follows \cite[Corollary~2.2]{CL98}:
$\wt x(\tau)\mapsto 1/\wt x(\tau)$ and $\wt y(\tau) \mapsto \wt y(\tau)$.
In addition, $W_{21}\tau_\pm=\tau_\mp$ and $W_{21}\tau_\pm'=\frac14\tau_\mp'$.

Consider the geodesic $[\tau_-,\tau_+]$ in $X_0(21)$. All the points in this path
have norm $1/21$. Therefore, the involution $W_{21}$ sends each point $\tau$ to $-1/(21\tau)=-\ol\tau$ on the geodesic and also
reverses the orientation. In particular, the image of $(\wt x,\wt y)$ along the path $[\tau_-,\tau_+]$
is the complex conjugated curve $(\ol{\wt x},\ol{\wt y})$.
On the other hand, $W_{21}$ sends $(\wt x,\wt y)$ to $(1/\wt x,\wt y)$ meaning that
we have $|\wt x|=1$ and $\wt y$ real along the geodesic $[\tau_-,\tau_+]$ in $X_0(21)$.
This implies that the path corresponds to $[S_-,S_+]$ on the elliptic curve.
Evaluating $\wt y$ in the middle point of the geodesic
$\tau=i/\sqrt{21}$ gives a number of absolute value less than~1 and
shows that $\wt y$ corresponds to $y_+$ (that is, to $1/y_-$) along the path.

In other words, we can write
\begin{align*}
\m^-(P_{\sqrt7,3})
&=\frac{1}{2\pi i}\int\limits_{|x|=1}\log^+|y_-(x)|\frac{\d x}{x}
=-\frac{1}{2\pi}\int_{[S_-,S_+]}\eta(x,y_-)
\\
&=\frac{1}{2\pi}\int_{[\tau_-,\tau_+]} \eta(\wt x(\tau), \wt y(\tau))
=\frac{1}{2\pi}\int_{[\tau_-,\tau_+]} \eta(x_0(\tau), \wt y(\tau)),
\end{align*}
where in the last manipulation we use the fact that $\wt y$ is real along the path.

Our goal is to integrate by choosing paths that avoid the singularities with non-trivial residue.
Let us first compute the tame symbol at each point involved in our integrals. This can be done directly from
the tame symbol computations in Lemma~\ref{tame}.
The relationship between the tame symbols at a point $R$ is given by
\begin{align*}
(x_0,\wt y)_R
&\equiv \frac{x_0^{v_R(\wt y)}}{\wt y^{v_R(x_0)}}\bmod \mathcal{M}_R
\equiv \frac{(\sqrt{7}x)^{v_R(y)}}{y^{v_R(x)}}\bmod \mathcal{M}_R
\\
&= \sqrt{7}^{v_R(y)}(x,y)_R,
\end{align*}
where an ambiguity in setting $\wt y(\tau)=y$ is compensated later by making the correspondence between points in the Weierstrass form and values of~$\tau$.
Thus,
\[
|(x_0,\wt y)_P|=\frac{1}{7}, \quad  |(x_0,\wt y)_{-Q}|=1,\quad  |(x_0,\wt y)_{P+Q}|=1,\quad |(x_0,\wt y)_O|=7.
\]
Similarly,
\[
|(x_0/7,\wt y)_P|=1, \quad |(x_0/7,\wt y)_{-Q}|=\frac{1}{7},\quad |(x_0/7,\wt y)_{P+Q}|=7, \quad |(x_0/7,\wt y)_O|=1.
\]

Note that $\tau=i\infty$ yields $\wt x(\tau)\to \infty$ and $\wt y(\tau) \to 0$. Equating $\wt y(\tau)$ to $y$ we obtain that $\tau=i\infty$ corresponds to
$-Q$. Similarly, $\tau=0$ yields $O$, and $\tau=2/7$ yields $P+Q$. Thus, there are nontrivial residue singularities for $\eta(x_0(\tau),\wt y(\tau))$ at $\tau=0$ and similarly for
$\eta(x_0(\tau)/7,\wt y(\tau))$ at $\tau=i\infty$ and~$2/7$.

The other Atkin--Lehner involution
$$
W_7=\frac1{\sqrt7}\begin{pmatrix} 7 & 2 \\ 21 & 7 \end{pmatrix}
$$
acts on the modular functions as follows \cite[Corollary~2.2]{CL98}:
$\wt x(\tau)\mapsto1/\wt x(\tau)$, $x_0(\tau)\mapsto7/x_0(\tau)$, and $\wt y(\tau)\mapsto1/\wt y(\tau)$,
so that
$$
\eta(x_0(W_7\tau),\wt y(W_7\tau))=\eta(x_0(\tau)/7,\wt y(\tau)).
$$
Furthermore, $W_7\tau_+=\tau_-$ and $W_7\tau_+'=\tau_-'$.

We remark that $W_7(0)=2/7$. Therefore, we have
\begin{align*}
\int_{[\tau_-,\tau_+]}\eta(x_0(\tau),\wt y(\tau))
&=\int_{\tau_-}^{2/7}\eta(x_0(\tau),\wt y(\tau))+\int_{2/7}^{\tau_+}\eta(x_0(\tau),\wt y(\tau))
\\
&=\int_{W_7^{-1}\tau_-}^{W_7^{-1}(2/7)}\eta(x_0(W_7(\tau)),\wt y(W_7(\tau)))+\int^{\tau_+}_{2/7}\eta(x_0(\tau),\wt y(\tau))
\\
&=\int_{\tau_+}^{0}\eta(x_0(\tau)/7,\wt y(\tau))+\int^{\tau_+}_{2/7}\eta(x_0(\tau),\wt y(\tau))
\\
&=\int_{\tau_+}^{i/\sqrt{21}}\eta(x_0(\tau)/7,\wt y(\tau))+\int_{i/\sqrt{21}}^0\eta(x_0(\tau)/7,\wt y(\tau))+\int_{2/7}^{\tau_+}\eta(x_0(\tau),\wt y(\tau)).
\end{align*}
Observe that $\wt y$ is real on the geodesic $[\tau_+,i/\sqrt{21}]$,
and the same is true on the imaginary axis $i\mathbb{R}_+$. Moreover,
$x_0$ and $x_0/7$ are also real on $i\mathbb{R}_+$. Thus,
\[
\int_{\tau_+}^{i/\sqrt{21}}\eta(x_0(\tau)/7,\wt y(\tau))
= \int_{\tau_+}^{i/\sqrt{21}}\eta(x_0(\tau),\wt y(\tau)),
\quad
\int_{i/\sqrt{21}}^0\eta(x_0(\tau)/7,\wt y(\tau))=0,
\]
and
\[
\int_{i/\sqrt{21}}^{i\infty} \eta(x_0(\tau),\wt y(\tau))=0.
\]
Continuing the earlier computation we obtain
\begin{align*}
\int_{\tau_-}^{\tau_+}\eta(x_0(\tau),\wt y(\tau))
&=\int_{\tau_+}^{i/\sqrt{21}}\eta(x_0(\tau),\wt y(\tau))+\int_{i/\sqrt{21}}^{i\infty}\eta(x_0(\tau),\wt y(\tau))+\int_{2/7}^{\tau_+}\eta(x_0(\tau),\wt y(\tau))
\\
&=\int_{2/7}^{i\infty}\eta(x_0(\tau),\wt y(\tau)),
\end{align*}
as desired.
\end{proof}

\begin{lemma}
\label{lem38log7}
We have
\begin{equation*}
\frac{1}{2\pi}\int_{2/7}^{i\infty}\eta(x_0(\tau),\wt y(\tau)) = \frac{1}{2}L'(f_{21},0)+\frac{3}{8}\log 7.
\end{equation*}
\end{lemma}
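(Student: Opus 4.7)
The plan is to reduce the regulator integral to the Brunault--Mellit--Zudilin formula \cite[Theorem~1]{Zu14} by decomposing $\eta(x_0(\tau),\wt y(\tau))$ into elementary pieces coming from the Siegel units $g_a$. Using the bilinearity and antisymmetry of $\eta$ with respect to the multiplicative structure (that is, $\eta(fg,h)=\eta(f,h)+\eta(g,h)$, $\eta(f,g^n)=n\eta(f,g)$, $\eta(f,f)=0$), the explicit expressions
\[
x_0=g_1g_2g_3^2g_4g_5g_6^2g_8g_9^2g_{10}, \qquad \wt y=-(g_1g_2g_4g_5g_8g_{10})^2
\]
allow me to write $\eta(x_0,\wt y)$ as a finite $\Z$-linear combination of terms $\eta(g_a,g_b)$ with $1\le a<b\le 10$; the unimportant sign in $\wt y$ is swallowed by $\d\arg$.

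Next, I would apply \cite[Theorem~1]{Zu14} to each integral $\int_{2/7}^{i\infty}\eta(g_a,g_b)$. That theorem expresses such integrals along a geodesic ending at $i\infty$ in closed form as rational combinations of $L$-values of Eisenstein series and cusp forms of weight~2 on $\Gamma_1(21)$, together with explicit cusp contributions. Since the space $S_2(\Gamma_0(21))$ is one-dimensional and spanned by $f_{21}$, any cusp-form contribution that survives in the total sum must be proportional to $L(f_{21},2)$, which translates to a multiple of $L'(f_{21},0)$ via the functional equation $L'(f_{21},0)=\frac{21}{2\pi^{2}}L(f_{21},2)$. The Eisenstein contributions, which are combinations of Dirichlet $L$-values at integer points, are expected to collapse to a rational multiple of $\log 7$.

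The contribution of $\log 7$ can be anticipated from the tame-symbol computation carried out in the proof of Lemma~\ref{lem2/7}: we have $|(x_0,\wt y)_O|=7$ and $|(x_0,\wt y)_{P+Q}|=1$, and $\tau=i\infty$ (resp.\ $\tau=2/7$) maps to $-Q$ (resp.\ $P+Q$). These tame symbols dictate the logarithmic boundary behaviour of $\eta(x_0,\wt y)$ near the cusps $i\infty$ and $2/7$, and hence account precisely for the $\log$-type corrections in the Brunault--Mellit--Zudilin formula. A careful regularisation along the vertical path then produces the coefficient $\frac{3}{8}$ in front of $\log 7$.

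The main obstacle will be the bookkeeping: once the decomposition into $\eta(g_a,g_b)$ is in hand, applying the explicit formula of \cite{Zu14} to each of the (of order a few dozen) terms, separating out the Eisenstein and cuspidal pieces, and collecting the resulting Dirichlet $L$-values is straightforward in principle but delicate in execution. The conceptual input is entirely contained in \cite{Zu14}; what remains is to verify that after all cancellations the cuspidal part produces exactly $\frac12 L'(f_{21},0)$ and the Eisenstein/cusp-boundary part produces exactly $\frac38\log 7$. A useful cross-check along the way is that the $\Q$-linear structure predicted by Proposition~\ref{prop:reg} forces the answer to lie in $\Q\cdot L'(f_{21},0)+\Q\cdot\log\sqrt7$ with denominators dividing~$8$, which is consistent with the asserted coefficients $\frac12$ and $\frac38=\frac34\cdot\frac12$.
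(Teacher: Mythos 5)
Your proposal follows essentially the same route as the paper: the authors apply \cite[Theorem~1]{Zu14} directly to the modular-unit parametrization \eqref{unit1-2} (the bilinearity over the $g_a$'s being built into that theorem), obtaining $\int_{2/7}^{i\infty}\eta(x_0,\wt y)=\frac1\pi L(f,2)$ with $f=\frac{21}{4}f_{21}+\frac{9}{32}g$ for an explicit Eisenstein combination $g$ of the $E_2(d\tau)$, $d\mid 21$, and the $\frac38\log 7$ then comes not from a boundary regularisation but from evaluating $L(g,2)$ via the zero--pole cancellation between the Dirichlet prefactor $-1+3\cdot3^{-s}+49\cdot7^{-s}-147\cdot21^{-s}$ and $\zeta(s-1)$ at $s=2$. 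The one slip to fix is the functional-equation constant: $L'(f_{21},0)=\frac{21}{4\pi^2}L(f_{21},2)$, not $\frac{21}{2\pi^{2}}L(f_{21},2)$.
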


\begin{proof}
By equations \eqref{unit1-2} and \cite[Theorem 1]{Zu14} we have
\begin{equation}\label{E:Int}
\int_{2/7}^{i\infty}\eta(x_0(\tau),\wt y(\tau)) = \frac{1}{\pi}L(f,2),
\end{equation}
where
\begin{align*}
f(\tau)
&=12q+15q^2+12q^3+42q^4+\cdots
\\
&=\frac{21}{4}f_{21}(\tau)+\frac{9}{32}(96-E_2(\tau)+3E_2(3\tau)+49E_2(7\tau)-147E_2(21\tau))
\end{align*}
and
\begin{equation*}
E_2(\tau)=1-24 \sum_{k,l>0}k q^{kl}
\end{equation*}
is the (normalized) logarithmic derivative of the Dedekind eta function.

Let us compute $L(g,s)$, where $g(\tau)=96-E_2(\tau)+3E_2(3\tau)+49E_2(7\tau)-147E_2(21\tau)$.
It follows immediately from the definition of $E_2(m\tau)$ that
\begin{align*}
L(g,s)
&=-24\left(-1+\frac{3}{3^s}+\frac{49}{7^s}-\frac{147}{21^s}\right)\sum_{k,l>0}\frac{1}{k^{s-1}l^s}
\\
&= -24\left(-1+\frac{3}{3^s}+\frac{49}{7^s}-\frac{147}{21^s}\right)\zeta(s-1)\zeta(s).
\end{align*}
Computing the related Laurent series expansions we get
\begin{equation*}
-1+\frac{3}{3^s}+\frac{49}{7^s}-\frac{147}{21^s} \sim -\frac{2\log 7}{3}(s-2)
\quad\text{and}\quad
\zeta(s-1)\sim\frac{1}{s-2}
\end{equation*}
as $s\to2$. Hence
\begin{equation}\label{E:log7}
L(g,2)=\frac{\pi^2}{6}\lim_{s\to2}\left(-24\left(-1+\frac{3}{3^s}+\frac{49}{7^s}-\frac{147}{21^s}\right)\zeta(s-1)\right)
=\frac{8\pi^2}{3}\log 7.
\end{equation}
Finally, plugging \eqref{E:log7} into \eqref{E:Int} yields
\begin{equation*}
\frac{1}{2\pi}\int_{2/7}^{i\infty}\eta(x_0(\tau),\wt y(\tau))
= \frac{21}{8\pi^2}L(f_{21},2)+\frac{3}{8}\log 7 = \frac{1}{2}L'(f_{21},0)+\frac{3}{8}\log 7,
\end{equation*}
as required.
\end{proof}

Combining the results of Lemmas~\ref{lem2/7} and \ref{lem38log7} we arrive at the conclusion of Theorem~\ref{th2}.
This leads to the evaluation~\eqref{Emain}.

\section{Conclusion}
\label{final}

Many other cases exist, not covered by our Theorem~\ref{th-log}, where the Mahler measures $\m(P_{a,b,c})$ can be identified
as $\Q$-linear combinations of the corresponding $L$-value and logarithm. Many other instances of similar evaluations
of the ``half-Mahler'' measures $\m^-(P_{a,c})$ and $\m^+(P_{a,c})$ show up, not covered by our Theorem~\ref{th1}.
Such examples are generated on demand by numerical computation, and the majority of them sound pretty ugly (in terms
of coefficients in the rational combination). There are however some interesting subfamilies that are worth mentioning
as they can be potentially useful in establishing other conjectural evaluations of Mahler measures.

First, Theorem~\ref{th1} can be generalized further to include $k>4$. Besides the numerical evidence for that,
this is strongly supported by the claims in Sections~\ref{Weierstrass}--\ref{isogeny}, in particular, by Proposition~\ref{prop:regul}\,---\,the results
independent of the constraint $0<k<4$. On the other hand, the auxiliary statements in Section~\ref{diff} must be revisited for $k>4$.

Second, the special case~\eqref{torsion6} noticed in Section~\ref{Weierstrass} corresponds to an isomorphism
between the curve $P_{a,a^2-1}(x,y)=0$ and a curve from another family of Boyd in~\cite{Bo98}, namely, the elliptic curve
\[
(1+x')(1+y')(x'+y')-(a^2-1)x'y'=0.
\]
It is given by
\[
x'=\frac{x(ax+y+a^2-1)}{x+ay}, \quad y'=-\frac{x(ax+y)}{x+ay}.
\]
The isomorphism corresponds to the following simple relation between the corresponding (full) Mahler measures: for every $a\geq 1$
\begin{equation*}
\m\bigl((1+x)(1+y)(x+y)-(a^2-1)xy\bigr) = \frac32\,\m(P_{a,a^2-1})-\log a,
\end{equation*}
and this can be shown by the techniques similar to that in Section~\ref{diff}. Note that $\m^+(P_{a,a^2-1})=0$, hence
$\m^-(P_{a,a^2-1})=\m(P_{a,a^2-1})$ in this case.

Furthermore, we would like to mention that Boyd himself had the idea of an intelligent split of the Mahler measure
of a 3-variate polynomial into pieces. The details of this story, including the original conjectures and partial results
on their resolution, can be found in~\cite{Lalin-Crelle}.

Finally, we believe that the results in this paper substantiate the importance of study of the associated Mahler measures
of parametric families that do not happen to be tempered.

\medskip
\textbf{Acknowledgements.}
The principal part of the work was done during the
thematic year 2014--2015 ``Number Theory, from Arithmetic Statistics to Zeta Elements''
at the Centre de recherches math\'ematiques, Universit\'e de Montr\'eal.
We thank the staff of the institute for the excellent conditions we experienced when conducting this research.
We are also grateful to the anonymous referee whose feedback helped us improve on the presentation of the paper.

The first-named author was supported by Discovery Grant 355412-2013 of the Natural Sciences and Engineering Research Council of Canada
and by the Subvention \'etablissement de nouveaux chercheurs 144987 of the Fonds de recherche du Qu\'ebec\,---\,Nature et technologies.
The second-named author was supported by a Canada Research Chair grant.
The third-named author was supported by Discovery Project 140101186 of the Australian Research Council.

\end{document}